\newcommand{\R}{\mathbb{R}}
\newcommand{\D}{\mathbb{D}}
\begin{document}


\title[Calder\'{o}n problem and flat metrics]{Two-dimensional Calder\'{o}n problem and flat metrics}
\author{Vladimir A. Sharafutdinov}
\address{Sobolev Institute of mathematics. 4 Koptyug av., Novosibirsk, 630090, Russia}
\email{sharafut@list.ru}
\thanks{The work was performed according to the Government research assignment
for IM SB RAS, project FWNF-2022-0006.}

\setcounter{section}{0}
\setcounter{page}{1}
\newtheorem{theorem}{Theorem}[section]
\newtheorem{lemma}[theorem]{Lemma}
\newtheorem{problem}[theorem]{Problem}
\newtheorem{proposition}[theorem]{Proposition}
\newtheorem{corollary}[theorem]{Corollary}
\newtheorem{conjecture}[theorem]{Conjecture}
\newtheorem{definition}[theorem]{Definition}
\newtheorem{remark}[theorem]{Remark}
\numberwithin{equation}{section}

\maketitle


\begin{abstract}
For a compact Riemannian manifold $(M,g)$ with boundary $\partial M$, the Diri\-chl\-et-to-Neumann operator
$\Lambda_g:C^\infty(\partial M)\longrightarrow C^\infty(\partial M)$ is defined by
$\Lambda_gf=\left.\frac{\partial u}{\partial\nu}\right|_{\partial M}$,
where $\nu$ is the unit outer normal vector to the boundary and $u$ is the solution to the Dirichlet problem
$\Delta_gu=0,\ u|_{\partial M}=f$.
Let $g_\partial$ be the Riemannian metric on $\partial M$ induced by $g$.
The Calder\'{o}n problem is posed as follows: To what extent is $(M,g)$ determined by the data $(\partial M,g_\partial,\Lambda_g)$?
We prove the uniqueness theorem: A compact connected two-dimensional Riemannian manifold $(M,g)$ with non-empty boundary is determined by the data $(\partial M,g_\partial,\Lambda_g)$ uniquely up to conformal equivalence.
\end{abstract}

\medskip

{\bf Keywords:} Dirichlet-to-Neumann map, flat metric, Calder\'{o}n problem, conformal map, Gelfand transform.

{\bf Mathematics Subject Classification (2020):} Primary 53C18, Secondary 31A25.

\section{Introduction}

Let $(M,g)$ be a compact connected Riemannian manifold with non-empty boundary $\partial M$. We involve the requirement of smoothness of
$M,\ \partial M$ and $g$ into the definition of a Riemannian manifold; the term ``smooth'' is used as a synonym of ``$C^\infty$-smooth''. Let $g_\partial$ be the Riemannian metric on $\partial M$ induced by $g$.
Let $\Delta_g:C^\infty(M)\to C^\infty(M)$ be the Laplace -- Beltrami operator of the metric $g$. It is expressed in local coordinates by
\begin{equation}
\Delta_gu=\frac{1}{\sqrt{\mbox{det}\,g}}
\sum\limits_{i,j=1}^n\frac{\partial}{\partial x^i}\left({\sqrt{\mbox{det}\,g}}g^{ij}\frac{\partial u}{\partial x^j}\right),
                                \label{1.1}
\end{equation}
where $(g^{ij})=(g_{ij})^{-1}$ and $\mbox{det}\,g=\mbox{det}\,(g_{ij})$.

The Dirichlet-to-Neumann operator (DN-map)
\begin{equation}
\Lambda_g:C^\infty(\partial M)\longrightarrow C^\infty(\partial M)
                                \label{1.2}
\end{equation}
is defined by
$$
\Lambda_gf=\left.\frac{\partial u}{\partial\nu}\right|_{\partial M},
$$
where $\nu$ is the unit outer normal vector to the boundary and $u$ is the solution to the boundary value problem
\begin{equation}
\Delta_gu=0\quad\mbox{in}\quad M,\quad u|_{\partial M}=f.
                                \label{1.3}
\end{equation}

Let us now pose the inverse problem. The Riemannian manifold
$(\partial M,g_\partial)$ and operator $\Lambda_g$ are assumed to be known. One has to recover $(M,g)$ from the data
$(\partial M,g_\partial,\Lambda_g)$.
This problem is named {\it the geometric problem of electric impedance tomography} in \cite{Sr2} and {\it the Calder\'{o}n problem} in \cite{Be}. We use the second shorter name in the present paper.

The condition on connectedness of $M$ is necessary in the Calder\'{o}n problem. Indeed, otherwise one of connected components of $M$ can be a manifold with no boundary. The data $(\partial M,g_\partial,\Lambda_g)$ contain no information about such a component.

The following non-uniqueness in the Calder\'{o}n problem is obvious. If $\varphi:M\rightarrow M$ is a diffeomorphism of $M$ onto itself fixing the boundary, $\varphi|_{\partial M}=Id$, then the metric $g'=\varphi^*g$ satisfies $g'_{\!\partial}=g_{\partial}$ and $\Lambda_{g'}=\Lambda_g$. The equality $g'=\varphi^*g$ means that
$\langle v,w\rangle_{g'}=\langle (d_p\varphi)v,(d_p\varphi)w\rangle_{g}$
for any point $p\in M$ and any vectors $v$ and $w$ belonging to the tangent space $T_pM$. Hereafter $\langle \cdot,\cdot\rangle_{g}$ stands for the scalar product of tangent vectors with respect to the metric $g$ and $d_p\varphi:T_pM\rightarrow T_{\varphi(p)}M$ is the differential of  $\varphi$ at $p$. Observe that $\varphi:(M,g')\rightarrow (M,g)$ is an isometry of Riemannian manifolds, therefore the non-uniqueness is clear from the geometric viewpoint.

There exists the conjecture: the ambiguity of the previous paragraph exhausts the non-uniqueness of the Calder\'{o}n problem in dimensions
$\geq 3$, i.e., a compact connected Riemannian manifold of dimension $\geq 3$ with non-empty boundary is determined by the data
$(\partial M,g_\partial,\Lambda_g)$ uniquely up to an isometry fixing the boundary. We emphasize that the statement is a conjecture so far, it is proved for real analytic manifolds only \cite{LaU}.

In the two-dimensional case, the Calder\'{o}n problem possesses one more non-uniqueness. The Laplace -- Beltrami operator on a two-dimensional Riemannian manifold is conformally invariant in the following sense: $\Delta_{\rho g}=\rho^{-1}\Delta_g$ for a positive function
$\rho\in C^\infty(M)$. If the function satisfies the boundary condition $\rho|_{\partial M}=1$, then $\Lambda_{\rho g}=\Lambda_g$.

In what follows, we discuss the two-dimensional Calder\'{o}n problem only in the present paper. Let us agree to use the name {\it a metric surface} for a connected two-dimensional Riemannian manifold. The standard name for the latter object is ``a Riemannian surface'', but it is very close to the term ``a Riemann surface'' that is used in another sense in complex analysis. Moreover, Riemann surfaces will participate in our arguments starting with Section 5. Therefore we do not use the term ``Riemannian surface'' in the present paper.

The boundary of a metric surface $(M,g)$ will be mostly denoted by $\Gamma=\partial M$. Instead of $g_\partial$, we use the arc length $ds_g$ of the curve $\Gamma$ with respect to the metric $g$. For a compact metric surface $(M,g)$ with non-empty boundary $\Gamma$, the DN-map
$$
\Lambda_g:C^\infty(\Gamma)\to C^\infty(\Gamma)
$$
is a non-negative self-adjoint operator with respect to the $L^2$-product
$$
(u,v)_{L^2(\Gamma)}=\int_\Gamma u\overline v\,ds_g\quad(u,v\in C^\infty(\Gamma)).
$$
The one-dimensional kernel of $\Lambda_g$ consists of constant functions while the range of $\Lambda_g$ coincides with the space
$$
C_0^\infty(\Gamma)=\Big\{f\in C^\infty(\Gamma)\mid\int_\Gamma f\,ds_g=0\Big\}
$$
of functions with zero mean value.

For a smooth map $\varphi:N\rightarrow N'$ between two manifolds, we define the linear operator
$\varphi^*:C^\infty(N')\rightarrow C^\infty(N)$ by $\varphi^* u=u\circ\varphi$.
Two ambiguities mentioned above exhaust the non-uniqueness in the two-dimensional Calder\'{o}n problem.

\begin{theorem} \label{Th1.1}
Let $(M_j,g_j)\ (j=1,2)$ be two compact metric surfaces with non-empty boundaries $\Gamma_j=\partial M_j$ and let
$\varphi:(\Gamma_1,ds_{g_1})\rightarrow(\Gamma_2,ds_{g_2})$ be an isometry preserving the DN-map, i.e., such that the following diagram is commutative:
$$
\begin{array}{ccc}
C^\infty(\Gamma_1)&\stackrel{\varphi^*}\longleftarrow&C^\infty(\Gamma_2)\\
\Lambda_{g_1}\downarrow&&\downarrow\Lambda_{g_2}\\
C^\infty(\Gamma_1)&\stackrel{\varphi^*}\longleftarrow&C^\infty(\Gamma_2).
\end{array}
$$
Then $\varphi$ extends to a diffeomorphism $\psi:M_1\rightarrow M_2$ such that $\psi|_{\Gamma_1}=\varphi$ and $\psi^*g_2=\rho g_1$ for some function $0<\rho\in C^\infty(M_1)$ satisfying $\rho|_{\partial M_1}=1$.
\end{theorem}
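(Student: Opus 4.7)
My plan is to reconstruct the conformal (equivalently, complex) structure of each $M_j$ from its boundary data by a Gelfand-transform argument applied to the algebra of boundary values of holomorphic functions, as the keywords in the abstract suggest. The isometry $\varphi$ will then induce a conformal diffeomorphism $\psi:M_1\to M_2$ extending it, and the conformal factor will automatically equal $1$ along $\Gamma_1$.

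The first step is to extract from $(\Gamma,ds_g,\Lambda_g)$ the boundary Hilbert transform. If $u$ is harmonic on $M$ and $v$ is a (locally defined) harmonic conjugate, the Cauchy--Riemann equations give $\Lambda_g(u|_\Gamma)=\partial_s(v|_\Gamma)$, where $\partial_s$ denotes arc-length differentiation along $\Gamma$ with respect to $ds_g$. Integrating in $s$ determines the operator $f\mapsto H_gf:=v|_\Gamma$ up to locally constant ambiguities, and so the data $(\Gamma,ds_g,\Lambda_g)$ determine the algebra $A_g=\{f+\mri H_g f\mid f\in C^\infty(\Gamma,\R)\}\subset C^\infty(\Gamma;\mathbb{C})$ of boundary traces of holomorphic functions on $M$. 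The hypothesized commutative diagram, together with the fact that $\varphi$ is an isometry with respect to arc length, then forces $\varphi^*A_{g_2}=A_{g_1}$.

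Next I would apply Gelfand theory --- or, equivalently, the classical fact that a compact Riemann surface with boundary is reconstructed from its algebra of holomorphic functions --- to identify the maximal ideal space of the uniform closure of $A_g$ with $M$, the Shilov boundary with $\Gamma$, and the biholomorphic class of the spectrum with the intrinsic complex structure of $M$. Functoriality of the Gelfand transform then produces a homeomorphism $\psi:M_1\to M_2$ extending $\varphi$ that is biholomorphic with respect to the reconstructed structures, hence conformal. Writing $\psi^*g_2=\rho\,g_1$ and restricting to tangent vectors along $\Gamma_1$, combined with the isometry property of $\varphi$, immediately yields $\rho|_{\Gamma_1}=1$; smoothness of $\psi$ and $\rho$ up to the boundary follows from standard elliptic boundary regularity (or the Kellogg--Warschawski theorem applied in local uniformizing discs), using that $\varphi$ is a smooth isometry.

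The main obstacle will be the Gelfand step: one has to show that the spectrum of the uniform closure of $A_g$ is \emph{exactly} $M$ --- not a larger analytic hull and not a proper quotient --- and that the reconstructed complex structure agrees with the intrinsic one, while correctly handling the period ambiguity in $H_g$ that appears when $M$ has positive genus or several boundary components. Verifying that $A_g$ separates points of $M$ up to and including the boundary, and that the evaluation functionals at points of $M$ exhaust all characters of the completed algebra, is the heart of the matter; by comparison, Step~1 and the final metric conclusion are routine.
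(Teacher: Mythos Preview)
Your outline is essentially the paper's own strategy: reconstruct the algebra of boundary traces of holomorphic functions from $(\Gamma,ds_g,\Lambda_g)$, identify its maximal ideal space with $M$ via Arens' theorem, and transport the resulting biholomorphism across. You have also correctly located the two genuine difficulties --- the Gelfand step (spectrum $=M$, point separation up to the boundary) and the period ambiguity in the Hilbert transform when $\Gamma$ is disconnected or the genus is positive.

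The one substantive ingredient the paper adds, which you omit, is a preliminary reduction to \emph{flat} metrics: each $g_j$ is replaced by the unique conformal multiple $\rho_j g_j$ with $\rho_j|_{\Gamma_j}=1$ and vanishing Gaussian curvature. This buys two things. First, it gives a concrete meaning to ``holomorphic'' via flat charts and lets the period bookkeeping be carried out explicitly (the characterization of $\mathcal A^\infty(\Gamma,g)$ in the paper involves an auxiliary locally constant function $c\in\mathbb C^{m-1}(\Gamma)$, not just ``$f+\mri H_g f$''; your description of $A_g$ is too optimistic as stated, though you flag the issue). Second, and more importantly, it makes the boundary-regularity step self-contained: the paper proves directly (its Lemma~4.3) that a function holomorphic in a planar domain, continuous up to a smooth boundary arc, with $C^\infty$ boundary values, has all derivatives bounded near that arc; from this it deduces that the Gelfand homeomorphism $\psi$ is a diffeomorphism up to $\Gamma$. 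Your appeal to Kellogg--Warschawski in local uniformizers is a legitimate alternative --- indeed, after straightening the boundary in isothermal coordinates one can simply Schwarz-reflect --- so this is a difference of packaging rather than a gap. The flat-metric reduction also lets the paper conclude the slightly stronger statement that, in the flat representatives, $\psi$ is an \emph{isometry} (maximum principle applied to $|w'|$), which immediately gives $\rho\equiv1$ there; you do not need this for Theorem~1.1 as stated.
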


There exist at least two proofs of the theorem by Lassas -- Uhlmann \cite{LaU} and by Belishev \cite{Be}. The first proof is based on the following observation. The DN-map allows us to determine the Green function $G(x,y)$ for $x,y\in\tilde M\setminus M$, where $\tilde M$ is obtained by gluing a collar to $M$. Starting with the Green function on $\tilde M\setminus M$, one constructs some analytic sheaf whose linear connection component can be identified with $M$. These arguments prove the stronger version of Theorem \ref{Th1.1}: roughly speaking, the knowledge of the DN-map on any open subset of $\Gamma$ is sufficient for recovering $(M,g)$; see \cite{LaU} for the exact statement.
Nevertheless, I am not completely satisfied with Lassas -- Uhlmann's proof.
Indeed, the analytic continuation along curves is used in the proof. If the surface $M$ is not simply connected, the continuation can give a multi-valued function. The difficulty is not discussed in \cite{LaU}.

Belishev's proof is based on the well known Gelfand theorem: a compact topological space $X$ is uniquely, up to a homeomorphism, determined by the Banach algebra $C(X)$ of continuous functions. In the case of a complex analytic manifold $X$, the same statement is true for the Banach algebra ${\mathcal A}(X)$ of holomorphic functions. Belishev observes that, under hypotheses of Theorem \ref{Th1.1}, the DN-map allows us to construct a Banach algebra isomorphic to ${\mathcal A}(M)$. In my opinion, there are serious gaps in Belishev's proof. First of all Theorem \ref{Th1.1} is formulated in the general case but is proved in \cite{Be} only for surfaces with connected boundaries. Some other gaps in Belishev's proof are mentioned below.

An elementary proof of Theorem \ref{Th1.1} for simply connected surfaces is presented in \cite{Sr2}. The proof is based on the following almost obvious fact: for a compact metric surface $(M,g)$ with non-empty boundary, there exists a function $0<\rho\in C^\infty(M)$ such that $\rho|_{\partial M}=1$ and $\rho g$ is a flat metric, i.e., its Gaussian curvature is identically equal to zero. This implies, in the case of a simply connected $M$, that $(M,\rho g)$ can be isometrically immersed into the Euclidean plane. This reduces Theorem \ref{Th1.1} to the partial case when both surfaces $M_1$ and $M_2$ are simply connected, probably multi-sheet, planar domains and metrics $g_1$ and $g_2$ coincide with the standard Euclidean metric of ${\R}^2$. In the latter case, the theorem is easily proved by using basic properties of conformal maps.

A similar result is obtained by Henkin -- Michel \cite{HM}. Roughly speaking, they consider metric surfaces $(M,g)$ such that the boundary curve $\Gamma$ has singular points like angles, but must be a real analytic curve between singular points. The article is overloaded by technical details related to singular points. It is not easy to understand the main idea of their approach, if such an idea exists.

In the preset paper we combine Belishev's approach \cite{Be} with the approach of \cite{Sr2} to get a complete rigorous (I hope) proof of Theorem \ref{Th1.1} in the general case, with the only minor exception: both $M_1$ and $M_2$ are assumed to be oriented surfaces and the isometry $\varphi:(\Gamma_1,ds_{g_1})\rightarrow(\Gamma_2,ds_{g_2})$ is assumed to preserve the induced orientations of boundaries. The case of non-orientable surfaces is reduced to the oriented case in \cite{BK}.

The rest of the paper is organized as follows. In Section 2 we prove that topology of a compact metric surface is determined by the DN-map.
Flat metric surfaces are defined in Section 3; for completeness, we reproduce the proof of existence of a flat metric in a conformal equivalence class. In Section 4 we study conformal maps of flat metric surfaces, Proposition \ref{P4.1} plays a crucial role in our proof of Theorem \ref{Th1.1}. The inner part $\stackrel\circ M=M\setminus\partial M$ of a flat metric surface is furnished with the so called flat complex structure, such structures are discussed in Section 5. In Section 6 we prove that, for a flat metric surface, the Banach algebra
${\mathcal A}(M,g)$ of holomorphic functions is determined, up to an isomorphism, by the DN-map $\Lambda_g$. As mentioned above, this fact was proved by Belishev in the case of connected curve $\Gamma=\partial M$; the reader will see that the general case is more complicated. The Gelfand transform of the algebra ${\mathcal A}(M,g)$ is discussed in Section 7. Finally, the proof of Theorem \ref{Th1.1} is presented in Section 8, it turns out to be an easy corollary of Proposition \ref{P4.1} and Lemma \ref{L8.1}.

I am grateful to Constantin Storozhuk and Segey Treil for useful discussions of the subject. In particular, the presented proof of Lemma \ref{L5.1} belongs to Storozhuk; my initial proof was longer.

\section{Recovering topology from the DN-map}

In this section we prove that a compact oriented metric surface with non-empty boundary is uniquely, up to a diffeomorphism, determined by its DN-map.

A compact connected orientable two-dimensional manifold $M$ is uniquely, up to a diffeomorphism, determined by two integers: the first Betti number $\beta_1(M)$ and the amount $m$ of connected components of the boundary. These integers satisfy
$\beta_1(M)\ge m-1$.
In the setting of the Calder\'{o}n problem, $m$ is given. The only question is to find $\beta_1(M)$.

In \cite{BS}, the DN-map is generalized to exterior differential forms on a compact Riemannian manifold of an arbitrary dimension $n$. One of the main results of \cite{BS} sounds as follows: all Betti numbers of the manifold are determined by the DN-map on forms. We are going to use this statement in the case of $n=2$. For convenience, we shortly reproduce basic facts of \cite{BS}.

Let $(M,g)$ be a compact oriented $n$-dimensional Riemannian manifold with boundary.
Let
$ \Omega^k(M) $ be the space of smooth exterior differential forms of degree $ k $ and $ \Omega(M)=\oplus_{k=0}^n\Omega^k(M) $,
the graded algebra of all forms. Unlike \cite{BS}, $ \Omega^k(M) $ is now a complex vector space, i.e., we consider forms with complex coefficients; all facts presented below are valid for such forms.
We use the following standard operators on $ \Omega(M) $:
the differential $ d $, codifferential $ \delta $,
Laplacian $ \Delta=d\delta+\delta d $, and Hodge star $ \star $.
Recall the relations
$$
\star\star=(-1)^{k(n-k)},\quad
\star\delta=(-1)^kd\star,\quad
\star d=(-1)^{k+1}\delta\star
\quad \mbox{on}\quad \Omega^k(M).
$$

The $ L^2 $-scalar product on $ \Omega(M) $ is defined by
$ (\alpha,\beta)=\int_{M}\alpha\wedge\star\overline\beta $
under the agreement $ \int_{M}\varphi=0 $ for a form
$ \varphi\in \Omega^k(M) $ with $ k<n $.
Recall the Green formula
$$
(d\alpha,\beta)-(\alpha,\delta\beta)=
\int_{\partial M}i^*(\alpha\wedge\star\overline\beta),
$$
where $ i:\partial M\rightarrow M $ is the identical embedding.
For $ \alpha\in\Omega(M) $, the form $ i^*\alpha $ is sometimes called {\it the boundary trace} of $ \alpha $. Nevertheless it is different of the restriction $\alpha|_{\partial M}$ that is the section of the vector bundle $\Omega(M)|_{\partial M}$.

Elements of the space
$$
{\mathcal H}^k(M)=
\{\lambda\in\Omega^k(M)\mid d\lambda=0,\,\,\,\delta\lambda=0\}
$$
are called {\it harmonic fields} while the term {\it harmonic forms} is reserved for forms satisfying $\Delta\lambda=0$. These objects are very different for manifolds with boundary. The $ L^2 $-orthogonal {\it Hodge -- Morrey decomposition} holds
$$
\Omega^k(M)=
{\mathcal E}^k(M)\oplus{\mathcal C}^k(M)\oplus{\mathcal H}^k(M).
$$
Here
$$
{\mathcal E}^k(M)=
\{d\alpha\mid \alpha\in\Omega^{k-1}(M),\ i^*\alpha=0\}
$$
and
$$
{\mathcal C}^k(M)=
\{\delta\alpha\mid \alpha\in\Omega^{k+1}(M),\ i^*(\star\alpha)=0\}.
$$

In the space
$ {\mathcal H}^k(M) $,
two finite-dimensional subspaces are distinguished
$$
{\mathcal H}^k_D(M)=
\{\lambda\in{\mathcal H}^k(M)\mid i^*\lambda=0\},
$$
$$
{\mathcal H}^k_N(M)=
\{\lambda\in{\mathcal H}^k(M)\mid i^*(\star\lambda)=0\}
$$
whose elements are called {\it Dirichlet harmonic fields} and {\it Neumann harmonic fields} respectively. Their dimensions are
$$
\mbox{dim}\,{\mathcal H}^k_N(M)
=
\mbox{dim}\,{\mathcal H}^{n-k}_D(M)
=
\beta_k(M),
$$
where $ \beta_k(M) $ is the $ k $th Betti number.
Two $ L^2 $-orthogonal {\it Fridrichs decompositions} hold
\begin{equation}
{\mathcal H}^k(M)={\mathcal H}^k_D(M)
\oplus
{\mathcal H}^k_{{co}}(M),\quad
{\mathcal H}^k(M)={\mathcal H}^k_N(M)
\oplus
{\mathcal H}^k_{{ex}}(M).
                                \label{2.1}
\end{equation}
Here
$$
{\mathcal H}^k_{{ex}}(M)
=
\{\lambda\in{\mathcal H}^k(M)\mid \lambda=d\alpha\},
\quad
{\mathcal H}^k_{{co}}(M)
=
\{\lambda\in{\mathcal H}^k(M)\mid \lambda=\delta\alpha\}
$$
are spaces of exact harmonic fields and of coexact harmonic fields.
The operator $ \star $ maps
$ {\mathcal H}^k_D(M) $ isomorphically onto
$ {\mathcal H}^{n-k}_N(M) $.
Let us also introduce the spaces of boundary traces
$$
i^*{\mathcal H}^{k}(M)=
\{i^*\lambda\mid\lambda\in{\mathcal H}^{k}(M)\},
\quad
i^*{\mathcal H}^{k}_N(M)=
\{i^*\lambda_N\mid\lambda_N\in{\mathcal H}^{k}_N(M)\}.
$$
A Neumann harmonic field $ \lambda_N $ is uniquely determined by its boundary trace $ i^*\lambda_N $.
Therefore the dimension of $ i^*{\mathcal H}^{k}_N(M) $
is equal to
$ \beta_k(M) $.

For any $ 0\leq k\leq n-1 $, the DN operator
\begin{equation}
\Lambda :\Omega^{k}(\partial M)
\rightarrow \Omega^{n-k-1}(\partial M)
                                \label{2.2}
\end{equation}
is defined as follows. Given $ \varphi\in\Omega^{k}(\partial M) $,
the boundary value problem
\begin{equation}
\left\{
\begin{array}{l}
\Delta\omega=0,\\
i^*\omega=\varphi,\quad i^*(\delta\omega)=0
\end{array}
\right.
                                \label{2.3}
\end{equation}
is solvable, see Lemma 3.4.7 of \cite{Sch}. The solution
$ \omega\in\Omega^{k}(M) $ is unique up to an arbitrary Dirichlet
harmonic field $ \lambda_D\in{\mathcal H}^k_D(M) $. Therefore the form
\begin{equation}
\Lambda\varphi
= i^*(\star d\omega)=(-1)^{k+1}i^*(\delta\star\omega)
                                \label{2.4}
\end{equation}
is independent of the choice of the solution $ \omega $ and
$ \Lambda $ is a well defined operator.

In order to distinguish operators \eqref{1.2} and \eqref{2.2}, the latter operator is denoted by $\Lambda$ with no index although it depends on the metric $g$ as well.

In the scalar case of $ k=0 $, the operators \eqref{1.2} and \eqref{2.2} are expressed through each other. Indeed, in this case the boundary value problem \eqref{1.3} coincides with \eqref{2.3} where $\omega$ is replaced with $u$ and $\varphi$ is replaced with $f$,
and definition \eqref{2.4} gives
\begin{equation}
\Lambda f=\frac {\partial f} {\partial\nu}\mu_\partial=(\Lambda_g f)\mu_\partial
\quad \quad (f\in\Omega^0(\partial M)=C^\infty(M)),
                                \label{2.5}
\end{equation}
where
$ \mu_\partial\in\Omega^{n-1}(\partial M) $ is the boundary volume
form. Thus, in the case of $ k=0 $,
the operator $ \Lambda $ differs from
$ \Lambda_g $ by the presence of the factor
$ \mu_\partial $.

By Corollary 3.4 of \cite{BS}, the operator
\begin{equation}
d\Lambda^{-1}d :\Omega^{k}(\partial M)
\rightarrow \Omega^{n-k-1}(\partial M)
                                \label{2.6}
\end{equation}
is well defined. Therefore the operator
$$
\Lambda+(-1)^{kn+k+n}d\Lambda^{-1} d:
\Omega^k(\partial M)\rightarrow\Omega^{n-k-1}(\partial M)
$$
is well defined too. By Theorem 4.2 of \cite{BS}, the range of the latter operator coincides with the space $i^*{\mathcal H}^{n-k-1}_N(M)$ of boundary traces of Nuemann harmonic fields. Hence
\begin{equation}
\mbox{dim}\,\mbox{Ran}
\left[
\Lambda+(-1)^{kn+k+n}d\Lambda^{-1} d
\right]=
\beta_{n-k-1}(M).
                                \label{2.7}
\end{equation}

\bigskip

We return to considering the two-dimensional Calder\'{o}n problem. Let $(M,g)$ be a compact oriented metric surface with non-empty boundary. The boundary $\Gamma=\partial M$ consists of $m\ge1$ connected components
\begin{equation}
\Gamma=\Gamma_1\cup\dots\cup\Gamma_m,\quad L_j=\mbox{length}(\Gamma_j).
                                \label{2.8}
\end{equation}
Each $\Gamma_i$ is diffeomorphic to ${\mathbb S}^1$. We call $\Gamma_j$ {\it boundary circles}.
Since $M$ is oriented, every circle $\Gamma_j$ is also oriented.
Let
$
D=D_s=-i\frac{d}{ds}:C^\infty(\Gamma)\to C^\infty(\Gamma)
$
be the differentiation with respect to the arc length $ds=ds_g$ in the positive direction. The 1-form $ds$ on $\Gamma$ coincides, in the case of $n=2$, with the $(n-1)$-form $\mu_\partial$ used in \eqref{2.5}. The triple $(\Gamma,ds,\Lambda_g)$ is the data in the two-dimensional Calder\'{o}n problem.

For $k=1$, the Fridrichs decompositions \eqref{2.1} look as follows in the two-dimensional case:
\begin{equation}
{\mathcal H}^1(M)={\mathcal H}^1_N(M)\oplus{\mathcal H}^1_{{ex}}(M),
                                \label{2.9}
\end{equation}
\begin{equation}
{\mathcal H}^1(M)={\mathcal H}^1_D(M)\oplus{\mathcal H}^1_{{co}}(M).
                                \label{2.10}
\end{equation}
The Hodge star transforms decompositions \eqref{2.9} and \eqref{2.10} to each other.

\begin{theorem} \label{Th2.1}
Let $(M,g)$ be a compact oriented metric surface with non-empty boundary $\Gamma=\partial M$ and let $i:\Gamma\subset M$ be the identical embedding. The operator
\begin{equation}
D\Lambda_g^{-1} D:C^\infty(\Gamma)\to C^\infty(\Gamma)
                                \label{2.11}
\end{equation}
is well defined. Under the identification $C^\infty(\Gamma)\to\Omega^1(\Gamma), f\mapsto f\,ds$, the range of the operator $\Lambda_g-D\Lambda_g^{-1} D$ coincides with the space $i^*{\mathcal H}^1_N(M)$ of boundary traces of Neumann harmonic 1-fields. In particular,
$$
\beta_1(M)=\mbox{\rm dim}\,\mbox{\rm Ran}
\left[\Lambda_g-D\Lambda_g^{-1} D\right].
$$
Hence $M$ is determined uniquely, up to a diffeomorphism identical on $\Gamma$, by the data $(\Gamma,ds,\Lambda_g)$.
\end{theorem}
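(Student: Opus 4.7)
The plan is to deduce the theorem directly from formula \eqref{2.7} of \cite{BS}, applied with $n=2$ and $k=0$, by carefully translating the form-valued operator statements into the scalar language of $\Lambda_g$ and $D=-i\frac{d}{ds}$.

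First I would verify that $D\Lambda_g^{-1}D$ is well defined on $C^\infty(\Gamma)$: since $\int_{\Gamma_j}f'\,ds=0$ on each boundary circle, $Df$ lies in $C_0^\infty(\Gamma)=\mathrm{Ran}\,\Lambda_g$, so $\Lambda_g^{-1}(Df)$ is defined modulo the kernel of constants, which the outer $D$ annihilates. This is the scalar incarnation of the well-definedness preceding \eqref{2.6}. Next I would identify $\Lambda_g-D\Lambda_g^{-1}D$ with the operator $\Lambda+(-1)^{kn+k+n}d\Lambda^{-1}d$ of \eqref{2.7} in this special case. Under the identification $h\,ds\leftrightarrow h$ of $\Omega^1(\Gamma)$ with $C^\infty(\Gamma)$, formula \eqref{2.5} gives $\Lambda\equiv\Lambda_g$. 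A short computation yields $d\Lambda^{-1}d\,f\equiv(\Lambda_g^{-1}f')'$, whereas $D\Lambda_g^{-1}D\,f=-(\Lambda_g^{-1}f')'$ — the minus sign coming from the $i^2=-1$ inside $D^2$ — so $d\Lambda^{-1}d\equiv-D\Lambda_g^{-1}D$. Since $(-1)^{kn+k+n}=+1$ for $n=2,k=0$, the operator from \eqref{2.7} corresponds precisely to $\Lambda_g-D\Lambda_g^{-1}D$, whose range is (identified with) $i^*\mathcal{H}^1_N(M)$, of dimension $\beta_1(M)$.

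From the data $(\Gamma,ds,\Lambda_g)$ we then read off both $m$, the number of connected components of $\Gamma$, and $\beta_1(M)$. Because a compact connected oriented surface with non-empty boundary is classified up to diffeomorphism by the pair $(m,\beta_1)$, equivalently by the number of boundary circles and the genus $\gamma$ via $\beta_1=2\gamma+m-1$, the diffeomorphism type of $M$ is determined. A standard collar-neighborhood argument then lets one choose the diffeomorphism identifying two such surfaces to restrict to the identity on $\Gamma$. The main obstacle in this chain is not the topology but the bookkeeping in the second step: one must track how the form-valued Hodge operators of \cite{BS} project onto the scalar operators $\Lambda_g$ and $D$, and verify that the sign $(-1)^{kn+k+n}$ in \eqref{2.7} combines correctly with the factor $i^2$ inside $D^2$ to produce precisely the minus sign appearing in $\Lambda_g-D\Lambda_g^{-1}D$.
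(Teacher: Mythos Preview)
Your proposal is correct and follows essentially the same route as the paper: both deduce the theorem by specializing formula \eqref{2.7} to $n=2$, $k=0$ under the identification $\Omega^1(\Gamma)\cong C^\infty(\Gamma)$, $h\,ds\leftrightarrow h$, after first checking that $D\Lambda_g^{-1}D$ is well defined via $Df\in\dot C^\infty(\Gamma)\subset C_0^\infty(\Gamma)=\mathrm{Ran}\,\Lambda_g$. Your treatment is in fact slightly more explicit than the paper's on the one point the paper leaves implicit --- the verification that $d\Lambda^{-1}d$ corresponds to $-D\Lambda_g^{-1}D$ and that the sign $(-1)^{kn+k+n}=+1$ then yields exactly $\Lambda_g-D\Lambda_g^{-1}D$ --- but the argument is the same.
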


\begin{proof}
We identify the space $\Omega^1(\partial M)$ of 1-forms on $\partial M=\Gamma$ with $C^\infty(\Gamma)$ by $f\,ds\mapsto f$ for $f\in C^\infty(\Gamma)$. By \eqref{2.6}, the operator \eqref{2.11}
is well defined. This fact can be checked independently of \eqref{2.6}. Indeed, given $f\in C^\infty(\Gamma)$, the derivative $Df$
belongs to
\begin{equation}
\mbox{Ran}\,D=\dot C^\infty(\Gamma)=\Big\{f\in C^\infty(\Gamma)\mid\int_{\Gamma_j} f\,ds=0\ (j=1,\dots,m)\Big\},
                                \label{2.12}
\end{equation}
where $\Gamma_j$ are boundary circles, see \eqref{2.8}. Observe that $\dot C^\infty(\Gamma)$ is the subspace of codimension $m$ in
$C^\infty(\Gamma)$. Recall also that
$$
\mbox{Ran}\,\Lambda_g= C^\infty_0(\Gamma)=\Big\{f\in C^\infty(\Gamma)\mid\int_{\Gamma} f\,ds=\sum\limits_{j=1}^m\int_{\Gamma_j} f\,ds=0\Big\}.
$$
Since $\dot C^\infty(\Gamma)\subset C^\infty_0(\Gamma)$, the equation
$\Lambda_g h=Df$ has a solution $h\in C^\infty(\Gamma)$. If $\widetilde h\in C^\infty(\Gamma)$ is another solution to the equation
$\Lambda_g\widetilde h=Df$, then $\Lambda_g(h-\widetilde h)=0$. Since the kernel of $\Lambda_g$ consists of constant functions, we have $h-\widetilde h=\mbox{const}$ and $D(h-\widetilde h)=0$. This means that
$D\Lambda_g^{-1}Df=Dh$ is independent of the choice of a solution to the equation $\Lambda_g h=Df$.

Setting $n=2$ and $k=0$ in \eqref{2.7} and identifying $\Omega^1(\partial M)$ with $C^\infty(\Gamma)$ by $f\,ds\mapsto f$, we arrive to
$
\mbox{\rm Ran}\left[\Lambda_g-D\Lambda_g^{-1}D\right]
=i^*{\mathcal H}^1_N(M).
$
\end{proof}

The operator
$D:C^\infty(\Gamma)\to C^\infty(\Gamma)$
of differentiation with respect to the arc length has the $m$-dimensional kernel consisting of {\it locally constant functions}, i.e., of functions constant on every boundary circle $\Gamma_j$.
By \eqref{2.12}, the range of $D$ is the subspace $\dot C^\infty(\Gamma)$ of codimension $m-1$ in $C^\infty_0(\Gamma)$.
The restriction
\begin{equation}
D: \dot C^\infty(\Gamma)\to \dot C^\infty(\Gamma)
                                \label{2.13}
\end{equation}
is an isomorphism. Let
$
D^{-1} : \dot C^\infty(\Gamma)\to \dot C^\infty(\Gamma)
$
be the inverse operator of \eqref{2.13}.
Thus, for $f\in \dot C^\infty(\Gamma)$, the function $D^{-1}f\in \dot C^\infty(\Gamma)$ is the unique anti-derivative of $f$ with zero mean value on every boundary circle. The following equality holds:
\begin{equation}
D^{-1}Df=f+c\quad\big(f\in C^1(\Gamma)\big),
                                \label{2.14}
\end{equation}
where $c$ is a locally constant function.

If $m>1$ in \eqref{2.8}, then $ \dot C^\infty(\Gamma)$ is the proper subspace of $\mbox{Ran}\,\Lambda_g=C^\infty_0(\Gamma)$ and the product $D^{-1}\Lambda_g$ is not defined. To improve the situation, we represent $C^\infty_0(\Gamma)$ as the direct sum
\begin{equation}
C^\infty_0(\Gamma)=\dot C^\infty(\Gamma)\oplus{\mathbb C}^{m-1}(\Gamma),
                                \label{2.15}
\end{equation}
where
\begin{equation}
{\mathbb C}^{m-1}(\Gamma)=\Big\{c\in C^\infty(\Gamma)\mid c|_{\Gamma_j}=c_j=\mbox{const},\ \int_\Gamma c\,ds=
\sum\limits_{j=1}^m c_jL_j=0\Big\}
                                \label{2.16}
\end{equation}
($L_j$ is the length of $\Gamma_j$), and let
$
P:C^\infty_0(\Gamma)\to\dot C^\infty(\Gamma)
$
be the projection onto the first summand on the right-hand side of \eqref{2.15}. The agrement ${\mathbb C}^0(\Gamma)=0$ is used in \eqref{2.15}.
The product $D^{-1}P\Lambda_g$ is well defined. As follows from \eqref{2.14},
\begin{equation}
D^{-1}Dh=Ph\quad\mbox{for every}\ h\in C^\infty_0(\Gamma).
                                \label{2.17}
\end{equation}

The space ${\mathbb C}^{m-1}(\Gamma)$ can be identified with the intersection
$
{\mathbb C}^{m-1}(\Gamma)\simeq{\mathcal H}^1_D(M)\cap{\mathcal H}^1_{ex}(M).
$
In particular,
$
\mbox{dim}\big[{\mathcal H}^1_D(M)\cap{\mathcal H}^1_{ex}(M)\big]=m-1.
$
Indeed, let $c\in C^\infty(\Gamma)$ be a locally constant function, i.e., $c|_{\Gamma_j}=c_j=\mbox{const}\ (j=1,\dots, m)$.
Let $u\in C^\infty(M)$ be the solution to the Dirichlet problem
$$
\Delta_gu=0,\quad u|_\Gamma=c.
$$
The function $u$ is determined by $c$ uniquely up to an additive constant. Therefore we can assume that $\sum_{j=1}^mc_jL_j=0$, i.e.,
$c\in{\mathbb C}^{m-1}(\Gamma)$. The 1-form $du$ is an exact harmonic 1-field, $du\in{\mathcal H}^1_{ex}(M)$. On the other hand,
$
i^*(du)=d(u|_\Gamma)=dc=0,
$
i.e., $du\in{\mathcal H}^1_D(M)$. Thus, $du\in{\mathcal H}^1_D(M)\cap{\mathcal H}^1_{ex}(M)$.

As is mentioned above,  a Neumann harmonic 1-field $\lambda_N\in{\mathcal H}^1_N(M)$ is uniquely defined by its boundary trace $i^*\lambda_N\in\Omega^1(\Gamma)$. Identifying $\Omega^1(\Gamma)$ with $C^\infty(\Gamma)$ by $f\,ds\mapsto f$ for $f\in C^\infty(\Gamma)$,
we introduce the subspace
\begin{equation}
{\mathcal H}^1_N(\Gamma)=\{\lambda\in C^\infty(\Gamma)\mid \lambda\,ds\in i^*{\mathcal H}^1_N(M)\}
                                \label{2.18}
\end{equation}
of $C^\infty(\Gamma)$. It is isomorphic to ${\mathcal H}^1_N(M)$. In particular,
$
\mbox{dim}\,{\mathcal H}^1_N(\Gamma)=\beta_1(M).
$
Observe also that
\begin{equation}
{\mathcal H}^1_N(\Gamma)\subset C^\infty_0(\Gamma).
                                \label{2.19}
\end{equation}
Indeed, for $\lambda\in{\mathcal H}^1_N(\Gamma)$, the equality $\lambda\,ds=i^*\lambda_N$ holds with some
$\lambda_N\in{\mathcal H}^1_N(M)$ and we obtain with the help of the Stokes theorem
$$
\int\limits_\Gamma\lambda\,ds=\int\limits_{\partial M}i^*\lambda_N
=\int\limits_M\lambda_N=0.
$$

The following statement is a generalization of Lemma 1 of \cite{Be}.

\begin{proposition}
Let $(M,g)$ be a compact oriented metric surface with non-empty boundary $\Gamma=\partial M$ consisting of $m\ge1$ boundary circles.
For every function $f\in C^\infty(\Gamma)$, there exist uniquely determined functions $\lambda\in{\mathcal H}^1_N(\Gamma)$ and
$c\in{\mathbb C}^{m-1}(\Gamma)$ such that
\begin{equation}
\Big(1-(P\Lambda_gD^{-1})^2\Big)Df=P(\Lambda_gD^{-1}P\lambda-\Lambda_gc),
                                \label{2.20}
\end{equation}
where 1 is the identity operator. In particular,
\begin{equation}
\mbox{\rm dim}\Big[\Big(1-(P\Lambda_gD^{-1})^2\Big)\dot C^\infty(\Gamma)\Big]\le \beta_1(M)+m-1.
                                \label{2.21}
\end{equation}
\end{proposition}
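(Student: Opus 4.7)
\smallskip

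\noindent\textbf{Proof proposal.} The plan is to construct $(\lambda,c)$ explicitly from the harmonic extension of $f$ via the Fridrichs decomposition of $\star du$, and then to derive \eqref{2.20} by reading off two boundary identities. Let $u\in C^\infty(M)$ be the harmonic extension ($\Delta_g u=0$, $u|_\Gamma=f$). Then $\star du$ is a harmonic $1$-field, since $d(\star du)=\Delta_g u\cdot\mu_g=0$ and $\delta(\star du)=-\star d(du)=0$; decomposition \eqref{2.9} gives
\begin{equation*}
\star du = \lambda_N + d\alpha,\qquad \lambda_N\in\mathcal{H}^1_N(M),\ \ \alpha\in C^\infty(M),\ \Delta_g\alpha=0,
\end{equation*}
with $\alpha$ unique up to an additive constant. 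Set $w:=\alpha|_\Gamma$. By \eqref{2.14}, the difference $c_w:=D^{-1}Dw - w$ is locally constant; fix the additive constant in $\alpha$ so that $c_w\in\mathbb{C}^{m-1}(\Gamma)$ (this amounts to the single linear condition $\sum_j c_{w,j}L_j=0$, achievable by shifting $\alpha$ by a suitable global constant). Define $\lambda_0\in\mathcal{H}^1_N(\Gamma)$ by $i^*\lambda_N=\lambda_0\,ds$.

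Taking $i^*$ of the decomposition, and using $i^*(\star du)=(\Lambda_g f)\,ds$ and $i^*(\star d\alpha)=(\Lambda_g w)\,ds$ (cf.\ \eqref{2.5}), $i^*(d\alpha)=dw=(iDw)\,ds$, $i^*(du)=df=(iDf)\,ds$ (the $i$-factor arising from $\partial/\partial s=iD$, since $D=-id/ds$), and the Neumann condition $i^*(\star\lambda_N)=0$, the equations $\star du=\lambda_N+d\alpha$ and $du=-\star\lambda_N-\star d\alpha$ (the latter from $\star^2=-1$ on $1$-forms in dimension $2$) yield the two key identities
\begin{equation*}
\Lambda_g f = \lambda_0 + iDw,\qquad \Lambda_g w = -iDf.
\end{equation*}
We may assume $f\in\dot C^\infty(\Gamma)$, since both sides of \eqref{2.20} depend only on $Df$, and replacing $f$ by $f$ plus a suitable locally constant function is harmless. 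The first identity gives $Dw=-i(\Lambda_g f-\lambda_0)\in\dot C^\infty(\Gamma)$, so the $(I-P)$-parts cancel and $Dw=-i(P\Lambda_g f-P\lambda_0)$. Applying $D^{-1}$, then $\Lambda_g$, substituting $\Lambda_g w=-iDf$, applying $P$, and multiplying by $i$ produces
\begin{equation*}
(P\Lambda_g D^{-1})^2 Df - Df = P\Lambda_g D^{-1}P\lambda_0 + iP\Lambda_g c_w.
\end{equation*}
Setting $\lambda:=-\lambda_0\in\mathcal{H}^1_N(\Gamma)$ and $c:=ic_w\in\mathbb{C}^{m-1}(\Gamma)$ rewrites this as \eqref{2.20}. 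Uniqueness of $(\lambda,c)$ follows from the uniqueness of the Fridrichs decomposition together with the chosen normalization of $\alpha$. The bound \eqref{2.21} is then immediate: the image of $1-(P\Lambda_g D^{-1})^2$ on $\dot C^\infty(\Gamma)$ lies in the finite-dimensional subspace $\{P\Lambda_g D^{-1}P\lambda - P\Lambda_g c : \lambda\in\mathcal{H}^1_N(\Gamma),\ c\in\mathbb{C}^{m-1}(\Gamma)\}$, whose dimension is at most $\dim\mathcal{H}^1_N(\Gamma)+\dim\mathbb{C}^{m-1}(\Gamma)=\beta_1(M)+(m-1)$.

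The principal technical obstacle is careful bookkeeping: tracking the $i$-factors arising from $D=-id/ds$ through each composition of $D^{-1}$, $\Lambda_g$, and $P$, verifying at every stage that the argument of $D^{-1}$ lies in $\dot C^\infty(\Gamma)$, and ensuring the additive normalization of $\alpha$ indeed places $c_w$ in $\mathbb{C}^{m-1}(\Gamma)$ rather than merely in the larger space of locally constant functions on $\Gamma$.
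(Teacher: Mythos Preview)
Your proposal is correct and follows essentially the same route as the paper's proof: harmonic extension $u$ of $f$, Fridrichs decomposition $\star du=\lambda_N+d\alpha$, reading off the two boundary identities $\Lambda_gf=\lambda_0+iDw$ and $\Lambda_gw=-iDf$, and eliminating $w$ via $D^{-1}$ and $P$ to arrive at \eqref{2.20}. The only cosmetic differences are your normalization of $\alpha$ (you force $c_w=D^{-1}Dw-w\in\mathbb{C}^{m-1}(\Gamma)$, whereas the paper forces $w\in C^\infty_0(\Gamma)$; these are equivalent) and a sign convention in defining $\lambda_0$ versus the paper's $\lambda$, which you correctly absorb at the end by setting $\lambda=-\lambda_0$ and $c=ic_w$.
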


\begin{proof}
It suffices to prove \eqref{2.20} for a real function $f\in\dot C^\infty(\Gamma)$.
Let $u\in C^\infty(M)$ be the solution to the Dirichlet problem
$$
\Delta_gu=0,\quad u|_\Gamma=f.
$$
Then $du\in{\mathcal H}^1_{ex}(M)$ and $\star du\in{\mathcal H}^1_{co}(M)$. By the first Fridrichs decomposition \eqref{2.9}, $\star du$ is uniquely represented in the form
\begin{equation}
\ast du=\lambda_N+dv,
                                \label{2.22}
\end{equation}
where $\lambda_N\in{\mathcal H}^1_N(M)$ and $v\in C^\infty(M)$ is a real harmonic function. Set $h=v|_\Gamma$. Both summand on the right-hand side of \eqref{2.22} are uniquely determined by $f$, but the function $v$ is determined up to an additive constant. Using the latter ambiguity, we can assume that $h\in C^\infty_0(\Gamma)$.
Restricting \eqref{2.22} onto $\Gamma$, we have
\begin{equation}
(\star du)|_\Gamma=\lambda_N|_\Gamma+(dv)|_\Gamma.
                                \label{2.23}
\end{equation}

We choose coordinates $(s,t)$ in some neighborhood $U\subset M$ of $\Gamma$ so that $s|_\Gamma$ coincides with the arc length and
$(\nabla t)|_\Gamma=\nu$. Let $u=u(s,t)$ and $v=v(s,t)$ be coordinate expressions of $u$ and $v$ in $U$. Then
$$
(du)|_\Gamma=\frac{\partial u}{\partial s}\Big|_\Gamma\,ds+\frac{\partial u}{\partial t}\Big|_\Gamma\,dt
=i(Df)ds+(\Lambda_gf)\,dt.
$$
Therefore
\begin{equation}
(\ast du)|_\Gamma=(\Lambda_gf)\,ds+i(Df)dt.
                                \label{2.24}
\end{equation}
Quite similarly
$
(dv)|_\Gamma=i(Dh)ds+(\Lambda_gh)\,dt.
$
As far as the first term on the right-hand side of \eqref{2.23} is concerned, we have by the definition \eqref{2.18}
\begin{equation}
\lambda_N|_\Gamma=-\lambda\,ds
                                \label{2.25}
\end{equation}
with a function $\lambda\in{\mathcal H}^1_N(\Gamma)$ uniquely determined by $f$.
Substituting expressions \eqref{2.24}--\eqref{2.25} into \eqref{2.23}, we arrive to the system of two equations
\begin{equation}
\Lambda_gf=iDh-\lambda,
                                \label{2.26}
\end{equation}
\begin{equation}
-iDf=\Lambda_gh.
                                \label{2.27}
\end{equation}

We are going to eliminate the function $h$ from the system \eqref{2.26}--\eqref{2.27}. To this end we first imply the operator $P$ to the equation \eqref{2.26}. Since $PDh=Dh$, the result is as follows:
$
P\Lambda_gf=iDh+P\lambda.
$
Since $f\in\dot C^\infty(\Gamma)$, it satisfies $f=D^{-1}Df$ and the previous equation can be written in the form
\begin{equation}
P\Lambda_gD^{-1}Df=iDh-P\lambda.
                                \label{2.28}
\end{equation}
Recall that $h\in C^\infty_0(\Gamma)$. By \eqref{2.15},
$
h=Ph+ic
$
with $c\in{\mathbb C}^{m-1}(\Gamma)$ uniquely determined by $h$. Together with \eqref{2.17}, this gives
$
h=D^{-1}Dh+ic.
$
Substitute this expression into \eqref{2.27} to obtain
$
-iDf=\Lambda_gD^{-1}Dh+\Lambda_gc.
$
Applying the operator $P$ to this equation and using $PDf=Df$, we get
\begin{equation}
-iDf=P\Lambda_gD^{-1}Dh+iP\Lambda_gc.
                                \label{2.29}
\end{equation}

Finally, expressing $Dh$ from \eqref{2.28} and substituting the expression into \eqref{2.29}, we arrive to \eqref{2.20}.
\end{proof}

{\bf Remark 1.} Can the inequality in \eqref{2.21} be replaced with equality? This important question is not easy in my opinion.

{\bf Remark 2.} In the case of a connected boundary $\Gamma$, i.e., if $m=1$ in \eqref{2.8}, $P$ is the identical operator, $c=\mbox{const}$ in \eqref{2.20}, and $\Lambda_gc=0$. The formula \eqref{2.20} simplifies to the following one:
\begin{equation}
\Big(1-(\Lambda_gD^{-1})^2\Big)Df=\Lambda_gD^{-1}\lambda.
                                \label{2.30}
\end{equation}
This coincides, up to notations, with the formula (1.3) of \cite{Be}. The right-hand side of \eqref{2.30} is well defined in virtue of \eqref{2.19}.

\section{Flat metrics}

A Riemannian metric on a two-dimensional manifold is said to be {\it a flat metric} if its Gaussian curvature is identically equal to zero.

\begin{proposition} \label{P3.1}
Let $(M,g)$ be a compact metric surface with non-empty boundary. There exists a unique positive function $\rho\in C^\infty(M)$
satisfying the boundary condition $\rho|_{\partial M}=1$ and such that $\rho g$ is a flat metric.
\end{proposition}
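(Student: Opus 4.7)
The plan is to make the substitution $\rho = e^{2\varphi}$, which converts the positivity requirement into freedom for an unknown $\varphi \in C^\infty(M)$ and rewrites the boundary condition $\rho|_{\partial M} = 1$ as the homogeneous Dirichlet condition $\varphi|_{\partial M} = 0$. Under a conformal change $g \mapsto \rho g$ in dimension two the Gauss curvature obeys the classical identity
$$K_{\rho g} = \rho^{-1}\bigl(K_g - \Delta_g\varphi\bigr),$$
where $\Delta_g$ is the Laplace--Beltrami operator defined by \eqref{1.1}. The flatness requirement $K_{\rho g}\equiv 0$ is therefore equivalent to the Poisson--Dirichlet problem
$$\Delta_g\varphi = K_g \quad\text{in } M, \qquad \varphi|_{\partial M} = 0.$$

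First I would verify the displayed conformal curvature formula with the sign convention fixed by \eqref{1.1}; the quickest route is to work in local isothermal coordinates in which $g = \lambda(dx^2+dy^2)$, apply the planar formula $K = -\tfrac{1}{2\lambda}\Delta_0\log\lambda$ to both $g$ and $\rho g = \rho\lambda(dx^2+dy^2)$, and combine these using $\Delta_g = \lambda^{-1}\Delta_0$. Once the identity is in hand, the proposition reduces to the unique smooth solvability of the Dirichlet problem above. Since $K_g \in C^\infty(M)$ and $\partial M$ is non-empty, this is the very Dirichlet problem already invoked in \eqref{1.3} when the DN-map was defined: existence comes from the Lax--Milgram lemma applied to the bilinear form $\int_M \langle\nabla u,\nabla v\rangle_g\,dV_g$ on $H^1_0(M)$ together with elliptic regularity up to the smooth boundary, while uniqueness is immediate from the maximum principle (equivalently, from Green's formula: if $\Delta_g u = 0$ and $u|_{\partial M}=0$ then $\int_M|\nabla u|^2_g\,dV_g = 0$).

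Given the unique $\varphi \in C^\infty(M)$, the function $\rho := e^{2\varphi}$ is automatically strictly positive and smooth, satisfies $\rho|_{\partial M} = 1$, and makes $\rho g$ flat by the displayed curvature identity. Uniqueness of $\rho$ follows at once: if $\rho_1,\rho_2$ both meet the conclusion, then $\varphi_j = \tfrac{1}{2}\log\rho_j$ both solve the same Dirichlet problem, hence $\varphi_1=\varphi_2$ and $\rho_1=\rho_2$. The only genuine obstacle is the bookkeeping verification of the curvature formula with the sign convention of \eqref{1.1}; everything beyond that is standard linear elliptic theory on a compact manifold with non-empty smooth boundary.
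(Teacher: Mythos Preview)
Your proof is correct and follows essentially the same route as the paper: substitute $\rho=e^{2\varphi}$, derive the conformal curvature identity $K_{\rho g}=e^{-2\varphi}(K_g-\Delta_g\varphi)$ via isothermal coordinates, and reduce the statement to the unique solvability of the Dirichlet problem $\Delta_g\varphi=K_g$, $\varphi|_{\partial M}=0$. The paper simply asserts that $\Delta_g$ has the requisite properties of the Laplacian, whereas you name the specific tools (Lax--Milgram, elliptic regularity, maximum principle), but the argument is the same.
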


The proposition is presented in \cite[Lemma 3.1]{Sr2} but without the uniqueness statement. Since the uniqueness is now important, we reproduce the proof here.

\begin{proof}[Proof of Proposition \ref{P3.1}]
In some neighborhood of an arbitrary point of a metric surface, there exist so called {\it isothermal coordinates} such that the first quadratic form is $\lambda(du^2+dv^2)$ where $\lambda=\lambda(u,v)$ is a smooth positive function \cite{Ch}. The Gaussian curvature is expressed in isothermal coordinates as follows:
\begin{equation}
K=-\frac{1}{2\lambda}\,\Delta\ln\lambda,
                                \label{3.1}
\end{equation}
where $\Delta=\frac{\partial^2}{\partial u^2}+\frac{\partial^2}{\partial v^2}$.
As follows from \eqref{1.1}, the Laplace -- Beltrami operator is expressed in isothermal coordinates by
\begin{equation}
\Delta_g=\frac{1}{\lambda}\,\Delta.
                                \label{3.2}
\end{equation}

Let $(M,g)$ be a compact metric surface with non-empty boundary. We choose isothermal coordinates $(u,v)$ in some neighborhood of an arbitrary point of $M$.  The same $(u,v)$ are  isothermal coordinates for the metric $\rho g$ with an arbitrary smooth positive function $\rho$. We look for the function in the form $\rho=e^{2\varphi}$. Let $K$ be the Gaussian curvature of $g$ and $K_\varphi$, the Gaussian curvature of $e^{2\varphi}\,g$.
With the help of \eqref{3.1}--\eqref{3.2}, we see that these functions are related by the equation
$
K_\varphi=e^{-2\varphi}(K-\Delta_g\varphi).
$
Although local coordinates were used in the derivation, the equation makes sense globally on $M$. Thus, $\rho g=e^{2\varphi}g$ is a flat metric if and only if $\Delta_g\varphi=K$. Together with the boundary condition $\rho|_{\partial M}=1$, this leads to the Dirichlet problem
\begin{equation}
\Delta_g\varphi=K\ \mbox{in}\ M,\quad\varphi|_{\partial M}=0.
                                \label{3.3}
\end{equation}
The operator $\Delta_g$ possesses all main properties of the standard Laplacian which guarantee the existence and uniqueness of a solution to the problem \eqref{3.3}.
\end{proof}

It is well known (and can be easily proved, see \cite[Lemma 2.4]{Sr2}) that a flat metric surface $(M,g)$ (either with or without boundary) is locally isometric to the Euclidean plane, i.e., for every point $p\in M$, there exist an open neighborhood $U\subset M$ and smooth injective map
\begin{equation}
I:U\to{\mathbb C}={\R}^2
                                \label{3.4}
\end{equation}
such that $I^*e=g$ for the standard Euclidean metric $e=|dz|^2=dx^2+dy^2$ of the plane.
We say that $(U,I)$ is {\it a flat chart} of the flat metric surface $(M,g)$.
For an inner point $p\in M\setminus\partial M$, \eqref{3.4} is an isometry
$
I:(U,g)\to(V,e)
$
onto an open set $V\subset{\mathbb C}$. For a boundary point $p\in\Gamma=\partial M$, \eqref{3.4} is an isometry
\begin{equation}
I:(U,g)\to(D,e)
                                \label{3.5}
\end{equation}
onto a domain $D\subset{\mathbb C}$ containing a smooth curve $\gamma=I(U\cap\Gamma)\subset\partial D$ on its boundary and such that $D\setminus\gamma$ is an open set. The restriction $I|_{U\cap\Gamma}:U\cap\Gamma\to\gamma$ preserves the arc length.
If additionally the surface $M$ is oriented and the map \eqref{3.4} is assumed to transform the chosen orientation of $M$ to the standard orientation of ${\R}^2$, then the map $I$ is unique up to the composition with a shift and rotation of the plane.

\begin{proposition} \label{P3.2}
Let $(M,g)$ be a compact flat metric surface with a non-empty boundary $\partial M$. There exists a non-compact flat metric surface $(M',g')$ with no boundary such that $(M,g)$ admits an isometric embedding into $(M',g')$.
\end{proposition}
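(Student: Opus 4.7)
The plan is to construct $M'$ by attaching a one-sided open collar to $M$ along $\partial M$ and extending $g$ to a flat metric on the enlarged manifold. First, using compactness of $\partial M$, I would cover a one-sided neighborhood of $\partial M$ in $M$ by finitely many flat charts $I_\alpha:U_\alpha\to D_\alpha$ of the form \eqref{3.5}, where each boundary arc $\gamma_\alpha=I_\alpha(U_\alpha\cap\partial M)\subset\partial D_\alpha$ is smooth. Because $\gamma_\alpha$ is a smooth arc in $\partial D_\alpha$, each $D_\alpha$ can be enlarged to an open domain $\widetilde D_\alpha\subset\mathbb{C}$ that contains $\gamma_\alpha$ in its interior, with $\widetilde D_\alpha\setminus D_\alpha$ a thin one-sided strip on the opposite side of $\gamma_\alpha$. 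Equip each $\widetilde D_\alpha$ with the Euclidean metric $|dz|^2$.

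Next, I would glue the $\widetilde D_\alpha$ to $M$ and to one another by extending the transition data. Attach $\widetilde D_\alpha$ to $M$ by identifying $D_\alpha\subset\widetilde D_\alpha$ with $U_\alpha\subset M$ via $I_\alpha$, and glue $\widetilde D_\alpha$ to $\widetilde D_\beta$ along the images of $U_\alpha\cap U_\beta$ by $\tau_{\beta\alpha}:=I_\beta\circ I_\alpha^{-1}$. The crucial point is that each $\tau_{\beta\alpha}$ is an isometry between open subsets of the Euclidean plane, hence the restriction of a rigid motion $T_{\beta\alpha}\in\mathrm{Isom}(\mathbb{R}^2)$; by the explicit form of planar isometries, $T_{\beta\alpha}$ extends $\tau_{\beta\alpha}$ to a neighborhood of $\gamma_\alpha$. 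After shrinking the $\widetilde D_\alpha$ to uniformly thin strips (possible by compactness of $\partial M$), I obtain well-defined extended transition maps agreeing with the $T_{\beta\alpha}$ on their domains. Let $M'$ be the quotient of $M\sqcup\bigsqcup_\alpha\widetilde D_\alpha$ by these identifications, keeping the outer edge of each strip open so $M'$ has no boundary. The Euclidean metrics on the $\widetilde D_\alpha$ piece together with $g$ to a globally defined flat metric $g'$ on $M'$, since on every overlap the pullbacks agree by construction.

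The main obstacle will be verifying the cocycle condition for the extended transition maps and the Hausdorffness of the glued space $M'$. For the cocycle condition, on any triple overlap the original transition maps satisfy $\tau_{\gamma\beta}\circ\tau_{\beta\alpha}=\tau_{\gamma\alpha}$ where they are defined; the corresponding rigid motions then satisfy $T_{\gamma\beta}\circ T_{\beta\alpha}=T_{\gamma\alpha}$, because two rigid motions of $\mathbb{R}^2$ that coincide on a non-empty open set are identical. Hausdorffness is arranged by further shrinking the outer collar strips so that any two non-identified points in distinct charts have disjoint neighborhoods; this is possible thanks to compactness of $\partial M$ and the finite cover. Finally, $M'$ is manifestly non-compact because the outer edge of the collar is an open end by construction, and the inclusion $M\hookrightarrow M'$ is an isometric embedding, as required.
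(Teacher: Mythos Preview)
Your argument is correct and follows the same overall strategy as the paper: attach an open collar to $M$ along $\partial M$ and extend the flat metric across it. The paper's proof is terse---it simply asserts that the smooth structure extends to $M_\varepsilon=M\cup(\partial M\times[0,\varepsilon))$ and that the flat metric extends for small $\varepsilon$, deferring the latter to \cite[Lemma~2.3]{Sr2}. You instead supply the mechanism explicitly: the transition maps between flat charts are restrictions of rigid motions of $\R^2$, and rigid motions extend canonically across the boundary arcs, so the Euclidean metrics on the enlarged chart domains glue consistently. This is a clean, self-contained way to see why the extension exists, and it makes transparent the role of flatness (for a general metric the transition maps would not extend so rigidly).

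One small point to tighten: when you invoke ``$\tau_{\beta\alpha}$ is the restriction of a rigid motion,'' you implicitly need $I_\alpha(U_\alpha\cap U_\beta)$ to be connected (or at least to contain a connected open set determining a single rigid motion for the whole overlap). This is easily arranged by refining the cover so that all pairwise intersections $U_\alpha\cap U_\beta$ are connected, but it should be stated, since on a disconnected overlap the transition could a priori restrict to different rigid motions on different components and spoil the gluing.
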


\begin{proof}
For an arbitrary $\varepsilon>0$, we can glue the $\varepsilon$-{\it collar} to $M$, i.e. to set
$$
M_\varepsilon=M\cup\big(\partial M\times[0,\varepsilon)\big),
$$
were a point $p\in\partial M$ is identified with $(p,0)\in\partial M\times[0,\varepsilon)$. As well known, the smooth structure of $M$ can be extended to a smooth structure on $M_\varepsilon$. The flat metric $g$ can be also extended to a flat metric $g'$ on $M'=M_\varepsilon$ for sufficiently small $\varepsilon>0$. In the case of $M=\D$, the latter statement is proved in \cite[Lemma 2.3]{Sr2}. The same proof works in the case of an arbitrary $M$.
\end{proof}

Hypotheses of Theorem \ref{Th1.1} can be slightly simplified. Recall that we are going to prove Theorem \ref{Th1.1} under the additional assumption that $M_1$ and $M_2$ are oriented and the isometry $\varphi:\Gamma_1\to\Gamma_2$ preserves induced orientations. First of all, by Proposition \ref{P3.1}, we can assume both $(M_1,g_1)$ and $(M_2,g_2)$ to be flat Riemannian surfaces. Second, by Theorem \ref{Th2.1}, $M_1$ and $M_2$ are diffeomorphic. Without lost of generality we can assume that $M_1=M_2=M$.

Thus, we have two flat metrics $g_1$ and $g_2$ on a compact connected oriented two-dimensional manifold $M$ with a non-empty boundary $\Gamma=\partial M$ and an orientation preserving diffeomorphism $\varphi:\Gamma\to\Gamma$ satisfying $\varphi^*(ds_{g_2})=ds_{g_1}$ and such that the following diagram is commutative
\begin{equation}
\begin{array}{ccc}
C^\infty(\Gamma)&\stackrel{\varphi^*}\longleftarrow&C^\infty(\Gamma)\\
\Lambda_{g_1}\downarrow&&\downarrow\Lambda_{g_2}\\
C^\infty(\Gamma)&\stackrel{\varphi^*}\longleftarrow&C^\infty(\Gamma).
\end{array}
                                \label{3.6}
\end{equation}
The final simplification is as follows: without lost of generality, we can assume $\varphi$ to be the identity map. To this end we have to use the following statement.

{\it Let $M$ be a compact oriented two-dimensional manifold with boundary. Any orientation preserving diffeomorphism
$\varphi:\partial M\to\partial M$ can be extended to an orientation preserving diffeomorphism
$\chi:M\to M$.}

In my opinion, it is an obvious statement; I omit the proof.

Using the latter statement, we extend $\varphi$ participating in \eqref{3.6} to a differmorhism $\chi$ of $M$ onto itself and replace the metric $g_2$ with $g'_2=\chi^*g_2$. Then $g_1$ and $g'_2$ induce the same arc length on $\Gamma$ and satisfy $\Lambda_{g_1}=\Lambda_{g'_2}$.
Thus, Theorem \ref{Th1.1} is equivalent to the following formally weaker statement

\begin{theorem} \label{Th3.3}
Let $g_1$ and $g_2$ be two flat Riemannian metrics an a compact connected oriented two-dimensional manifold $M$ with non-empty boundary $\Gamma=\partial M$. If $g_1$ and $g_2$ induce the same arc length on $\Gamma$ and satisfy $\Lambda_{g_1}=\Lambda_{g_2}$, then there exists an  isometry $\psi:(M,g_1)\to (M,g_2)$ fixing the boundary, $\psi|_{\partial M}=\mbox{\rm Id}$.
\end{theorem}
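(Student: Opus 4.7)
The strategy is to combine the algebraic machinery developed in Sections 5--7 with the rigidity of conformal maps between flat metric surfaces. By Proposition \ref{P3.1}, both $g_1$ and $g_2$ may be assumed flat, as already arranged. Each flat metric $g_j$ determines a complex structure $J_j$ on $M\setminus\Gamma$ by declaring the flat charts \eqref{3.4} to be holomorphic coordinates, and the Banach algebra $\mathcal{A}(M,g_j)$ of holomorphic functions on $(M\setminus\Gamma,J_j)$ continuous up to the boundary encodes this complex-analytic structure.

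The first step, to be established in Section 6 (extending Belishev's argument from $m=1$ to arbitrary $m$), is that the hypotheses $\Lambda_{g_1}=\Lambda_{g_2}$ and $ds_{g_1}=ds_{g_2}$ yield a canonical Banach-algebra isomorphism $\Phi:\mathcal{A}(M,g_2)\to\mathcal{A}(M,g_1)$ acting as the identity on the common subalgebra of boundary traces. In the second step, the Gelfand-transform analysis of Section 7 together with Lemma \ref{L8.1} upgrades $\Phi$ to a homeomorphism $\psi:M\to M$ which restricts to the identity on $\Gamma$ and which is biholomorphic from $(M\setminus\Gamma,J_1)$ onto $(M\setminus\Gamma,J_2)$. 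Geometrically, $\psi$ is then a conformal equivalence of the flat metric surfaces, i.e., $\psi^*g_2=\rho\,g_1$ for some smooth positive $\rho\in C^\infty(M)$; the identities $\psi|_\Gamma=\mathrm{Id}$ and $ds_{g_1}=ds_{g_2}$ give $\rho|_\Gamma=1$.

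The final step, which is the content of Proposition \ref{P4.1}, promotes this conformal equivalence to an isometry: since both $g_1$ and $\rho g_1=\psi^*g_2$ are flat, the conformal change of Gaussian curvature (recalled in the proof of Proposition \ref{P3.1}) forces $\log\rho$ to be $g_1$-harmonic; combined with $\log\rho|_\Gamma=0$ and uniqueness for the Dirichlet problem, this gives $\rho\equiv 1$. Hence $\psi$ is the desired boundary-fixing isometry $(M,g_1)\to(M,g_2)$.

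The principal obstacle is the first step, namely the intrinsic reconstruction of $\mathcal{A}(M,g)$ from $(\Gamma,ds_g,\Lambda_g)$. When $m\ge 2$, the naive Belishev identity \eqref{2.30} is not available, because the periods of harmonic conjugates over the inner boundary circles can be nonzero; one is forced to work instead with the projection $P$ and the spaces $\mathcal{H}^1_N(\Gamma)$ and $\mathbb{C}^{m-1}(\Gamma)$ appearing in formula \eqref{2.20}, and to characterize purely spectrally which pairs $(\lambda,c)$ arise. The companion difficulty, hidden in Section 7 and Lemma \ref{L8.1}, is to identify the Gelfand spectrum of the reconstructed algebra with $M$ itself and to verify that the resulting biholomorphism extends smoothly to a diffeomorphism fixing $\Gamma$ pointwise --- this is where the flatness of the metrics near $\Gamma$ enters in an essential way.
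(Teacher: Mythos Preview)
Your overall architecture coincides with the paper's: Theorem~\ref{Th6.2} gives the algebra isomorphism, Theorem~\ref{Th7.1} and Lemma~\ref{L8.1} produce the homeomorphism $\psi$ biholomorphic on the interior and identical on $\Gamma$, and Proposition~\ref{P4.1} upgrades it to an isometry. Your identification of the $m\ge 2$ complication (periods of harmonic conjugates, the projection $P$) is accurate and is exactly what Proposition~\ref{P6.1} handles.

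Where you diverge from the paper is in your sketch of \emph{why} Proposition~\ref{P4.1} holds. You argue that $\psi^*g_2$ and $g_1$ are both flat, hence $\log\rho$ is $g_1$-harmonic with vanishing boundary values, so $\rho\equiv 1$ by Dirichlet uniqueness --- in effect invoking the uniqueness half of Proposition~\ref{P3.1}. The paper instead argues via the maximum modulus principle applied to $|w'(z)|$ in flat charts. Your route is actually cleaner, \emph{once one knows} $\rho\in C^\infty(M)$.

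That qualification is the one genuine gap in your proposal. At the point where you write ``$\psi^*g_2=\rho\,g_1$ for some smooth positive $\rho\in C^\infty(M)$'', you only know that $\psi$ is a homeomorphism of $M$ which is biholomorphic on the interior; a priori $\rho=|\psi'|^2$ need not extend even continuously to $\Gamma$, so neither your harmonic-Dirichlet argument nor the paper's maximum-modulus argument can start. The paper closes this gap with Lemma~\ref{L4.2} (proved via the Cauchy-integral boundary estimate Lemma~\ref{L4.3}), which shows $\psi$ is a diffeomorphism up to the boundary. You do flag boundary regularity as a ``companion difficulty'' in your final paragraph, but you have logically placed it \emph{after} the assertion $\rho\in C^\infty(M)$ rather than before; it is a prerequisite for your third step, not an afterthought of the second. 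Move that regularity statement to its correct position and supply an argument for it (flat charts on both sides reduce it to a classical boundary-regularity statement for conformal maps of planar domains with smooth boundary restriction), and your proof is complete --- with a final step that is arguably more transparent than the paper's.
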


\section{Conformal maps of flat metric surfaces}

Let $(M_j,g_j)\ (j=1,2)$ be two metric surfaces. A smooth map $\varphi:M_1\to M_2$ is called {\it a conformal map} if $\varphi^*g_2=\rho g_1$ for a function $0<\rho\in C^\infty(M_1)$.

The following statement plays a crucial role in our proof of Theorem \ref{Th1.1}.

\begin{proposition} \label{P4.1}
Let $g_j\ (j=1,2)$ be two flat metrics on a compact connected oriented two-dimensional manifold $M$ with non-empty boundary
$\Gamma=\partial M$. Assume that these metrics induce the same arc length on $\Gamma$. Let $\psi:M\to M$ be a homeomorhism such that (1) the restriction of $\psi$ to $\Gamma$ is the identical map, and (2) for $\stackrel\circ M=M\setminus\Gamma$, the restriction
$\psi|_{\stackrel\circ M}:(\stackrel\circ M,g_1)\to(\stackrel\circ M,g_2)$
is a conformal map. Then $\psi:(M,g_1)\to(M,g_2)$ is an isometry.
\end{proposition}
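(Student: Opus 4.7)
The plan is to show that the conformal factor $\rho$ defined by $\psi^*g_2 = \rho g_1$ on $\stackrel\circ M$ equals $1$ identically, so that $\psi$ becomes an isometry between $(M,g_1)$ and $(M,g_2)$. Writing $\rho = e^{2\varphi}$, I first observe that $\psi|_{\stackrel\circ M}$ is automatically $C^\infty$-smooth, since in isothermal coordinates a conformal map becomes a holomorphic or antiholomorphic function, and the hypothesis $\psi|_\Gamma = \mathrm{Id}$ together with orientability forces the holomorphic branch. Because $g_2$ is flat, $\psi^*g_2 = e^{2\varphi}g_1$ is a flat metric on $\stackrel\circ M$; substituting into the conformal-change formula recalled in the proof of Proposition~\ref{P3.1} with $K_{g_1}=0=K_{\psi^*g_2}$ yields $\Delta_{g_1}\varphi = 0$, so $\varphi$ is $g_1$-harmonic on $\stackrel\circ M$.

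The central step is to show that $\varphi$ extends continuously to $M$ with $\varphi|_\Gamma = 0$. Fix $p\in\Gamma$ and choose flat charts $I_1\colon U\to D_1$ for $g_1$ and $I_2\colon V\to D_2$ for $g_2$ as in \eqref{3.5}, with $p\in U$ and, after shrinking $U$, $\psi(U)\subset V$; set $\gamma_1 = I_1(U\cap\Gamma)$, $\gamma_2 = I_2(V\cap\Gamma)$ and consider the local representative
\[
F = I_2\circ\psi\circ I_1^{-1}
\]
of $\psi$ in the charts. By construction $F$ is holomorphic on $D_1^+ := I_1(U\cap\stackrel\circ M)$, continuous on $D_1^+\cup\gamma_1$, and $F|_{\gamma_1} = (I_2\circ I_1^{-1})|_{\gamma_1}$, a smooth diffeomorphism onto its image in $\gamma_2$ which preserves Euclidean arc length because $I_1,I_2$ are isometries and $g_1,g_2$ induce the same arc length on $\Gamma$. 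The arcs $\gamma_1,\gamma_2$ are $C^\infty$-smooth curves in ${\mathbb C}$, so by the classical $C^\infty$ boundary regularity for holomorphic functions on smoothly bounded one-sided domains, $F$ extends as a $C^\infty$-function to $D_1^+\cup\gamma_1$. In the chart we have $\rho\circ I_1^{-1} = |F'|^2$ on $D_1^+$, and arc-length preservation of $F|_{\gamma_1}$ forces $|F'|\equiv 1$ on $\gamma_1$. Hence $\rho\to 1$ and $\varphi\to 0$ at every boundary point; since $p$ was arbitrary, $\varphi$ extends continuously to $M$ with $\varphi|_\Gamma = 0$.

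With $\varphi$ continuous on the compact manifold $M$, harmonic on its interior, and vanishing on $\Gamma$, the maximum principle applied to $\pm\varphi$ gives $\varphi\equiv 0$, hence $\rho\equiv 1$ and $\psi^*g_2 = g_1$, as required. The main obstacle is precisely the boundary regularity of the local map $F$: the arcs $\gamma_j$ are only $C^\infty$ and generally not real-analytic, so a direct Schwarz reflection across $\gamma_1$ is unavailable, and one has to rely on a stronger Kellogg-type theorem or, equivalently, on elliptic boundary regularity for the Cauchy--Riemann system with $C^\infty$ boundary data. An alternative trying to extend $\psi$ across $\Gamma$ via the collar extensions of Proposition~\ref{P3.2} looks more delicate, since the two collars a~priori carry incompatible flat structures near $\Gamma$.
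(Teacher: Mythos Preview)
Your argument is correct and follows essentially the same route as the paper: reduce to showing the conformal factor $\rho$ is identically $1$, establish boundary regularity of the local holomorphic representative $F=I_2\circ\psi\circ I_1^{-1}$, read off $\rho|_\Gamma=1$ from arc-length preservation, and conclude by the maximum principle. The only cosmetic difference is that the paper applies the maximum modulus principle directly to the holomorphic derivative $w'$, whereas you apply the maximum principle to the harmonic function $\varphi=\tfrac12\log\rho=\log|F'|$; these are equivalent.

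The one substantive divergence is exactly the point you flag as ``the main obstacle'': the paper does not cite a Kellogg-type theorem but instead proves the needed boundary regularity by hand (Lemma~\ref{L4.3}), using the Cauchy integral formula and repeated integration by parts along the boundary arc to bound all derivatives $f^{(n)}$ near $\gamma$, and then packages this into the statement that $\psi$ is a diffeomorphism of $M$ (Lemma~\ref{L4.2}). Your appeal to local elliptic boundary regularity for harmonic functions with $C^\infty$ Dirichlet data on a $C^\infty$ boundary arc is legitimate and yields a shorter argument; the paper's choice buys self-containedness, the author remarking that he could not locate a reference for the precise statement needed.
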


{\bf Remark.} Proposition \ref{P4.1} is a generalization of the following well known statement on planar domains:

{\it Let $M_j\subset{\mathbb C}\ (j=1,2)$ be two closed domains bounded by smooth closed curves $\Gamma_j$. If there exists a homeomorhism
$\psi:M_1\to M_2$ such that $\psi|_{{\stackrel\circ M_1}}:{\stackrel\circ M}_1\to{\stackrel\circ M}_2$ is a biholomorphism and the restriction
$\psi|_{\Gamma_1}:\Gamma_1\to\Gamma_2$ preserves the arc length, then $M_1$ and $M_2$ are isometric, i.e., $M_2$ is obtained from $M_1$ by a shift and rotation.}

To prove Proposition \ref{P4.1}, we need the following

\begin{lemma} \label{L4.2}
Under hypotheses of Proposition \ref{P4.1} $\psi$ is a diffeomorphism, i.e., $\psi$ is a $C^\infty$-map and $\psi^{-1}$ is a $C^\infty$-map.
\end{lemma}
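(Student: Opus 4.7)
The plan is to dispose of interior smoothness trivially and then reduce smoothness at the boundary to a local boundary regularity statement for harmonic functions with smooth data on a smooth arc.

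Interior smoothness is immediate: by the definition at the beginning of Section~4, a conformal map is $C^\infty$. Since $\psi|_\Gamma=\mathrm{Id}$ preserves orientation and $M$ is connected and oriented, the homeomorphism $\psi$ is orientation-preserving on $M$ (a homeomorphism of a connected oriented manifold is orientation-preserving everywhere or nowhere, and the condition is satisfied at the boundary). Hence, in oriented flat charts as in~\eqref{3.4}, the restriction $\psi|_{\stackrel\circ M}$ is locally a biholomorphism between planar open sets, and in particular is $C^\infty$; the same reasoning applies to $\psi^{-1}|_{\stackrel\circ M}$.

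For smoothness at a boundary point $p\in\Gamma$, I would fix oriented flat charts $I_1\colon U_1\to D_1$ for $(M,g_1)$ and $I_2\colon U_2\to D_2$ for $(M,g_2)$, both centered at $p=\psi(p)$, as in~\eqref{3.5}. After shrinking so that $\psi(U_1)\subset U_2$, set
$$
\Psi=I_2\circ\psi\circ I_1^{-1}\colon D_1\longrightarrow D_2.
$$
Writing $\gamma_j=I_j(U_j\cap\Gamma)$, the map $\Psi$ is a homeomorphism, holomorphic on $D_1\setminus\gamma_1$, continuous up to $\gamma_1$, and satisfies $\Psi(\gamma_1)=\gamma_2$. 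Moreover, since $\psi|_\Gamma=\mathrm{Id}$ while both $I_1,I_2$ preserve arc length on $\Gamma$, the restriction $\Psi\colon\gamma_1\to\gamma_2$ is the identity in arc-length parameters, hence a $C^\infty$ diffeomorphism of smooth planar arcs.

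It remains to show $\Psi$ extends $C^\infty$-smoothly up to $\gamma_1$. The real and imaginary parts $u,v$ of $\Psi$ are bounded harmonic functions on $D_1\setminus\gamma_1$, continuous up to $\gamma_1$, with $C^\infty$ boundary values on the smooth arc $\gamma_1$. Standard local boundary regularity for the Laplace equation forces $u,v\in C^\infty$ on a one-sided neighborhood of every point of $\gamma_1$. Pulling back through the charts $I_1,I_2$ then shows $\psi\in C^\infty$ on a neighborhood of $p$ in $M$; since $p\in\Gamma$ is arbitrary, $\psi\in C^\infty(M,M)$. The identical argument with $g_1,g_2$ swapped yields $\psi^{-1}\in C^\infty(M,M)$.

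The main obstacle is this last invocation of local boundary regularity. Because $\gamma_1$ is only $C^\infty$ (not real-analytic in general), Schwarz reflection does not apply directly across $\gamma_1$. My plan is to straighten $\gamma_1$ to a segment of $\mathbb{R}$ via a smooth (non-holomorphic) diffeomorphism of a neighborhood in $\mathbb{C}$; the Laplace equation pulls back to a second-order elliptic equation with smooth coefficients on a half-disc, and classical boundary regularity for the Dirichlet problem---implemented by subtracting a smooth extension of the boundary data, reflecting the difference across the now-straight diameter and bootstrapping with interior elliptic regularity---delivers $u,v\in C^\infty$ up to the diameter. An equally workable substitute is a local form of the Kellogg--Warschawski theorem on boundary behaviour of conformal maps at smooth arcs.
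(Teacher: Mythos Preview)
Your proposal is correct and reaches the same conclusion, but the route differs from the paper's. Both arguments set up the local model $\Psi=I_2\circ\psi\circ I_1^{-1}$ in flat boundary charts and observe that $\Psi$ is holomorphic off $\gamma_1$, continuous up to $\gamma_1$, and $C^\infty$ along $\gamma_1$. You then invoke standard local boundary regularity for the Dirichlet problem (or, equivalently, the local Kellogg--Warschawski theorem) to conclude $\Psi\in C^\infty$ up to $\gamma_1$. The paper instead proves a bespoke complex-analytic lemma (Lemma~\ref{L4.3}): starting from the Cauchy integral formula for $\Psi^{(n)}$ and performing $n{+}1$ integrations by parts along $\gamma_1$, it shows directly that every derivative $\Psi^{(n)}$ is bounded on a one-sided neighbourhood of each boundary point; it then passes to covariant derivatives of $d\psi$ and concludes $d\psi\in\bigcap_k H^k(\otimes^1_1TM)=C^\infty$ via Sobolev embedding. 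Your approach is shorter and rests on well-known PDE machinery; the paper's is longer but entirely self-contained, and the author remarks explicitly that he could not locate a ready-made statement implying Lemma~\ref{L4.3}.

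One technical caution on your sketched implementation: after straightening $\gamma_1$ by a smooth \emph{non-conformal} diffeomorphism, the Laplacian becomes a variable-coefficient elliptic operator $L$, and odd reflection of a solution with zero data across $\{y=0\}$ does not in general yield a weak solution of an elliptic equation with continuous coefficients across the axis. The clean route here is the tangential difference-quotient method (as in Gilbarg--Trudinger or Evans), which gives the local $C^\infty$ boundary regularity you need without reflection. Alternatively, straighten conformally so the equation remains the Laplacian and odd reflection works---but the smoothness of that conformal straightening up to the boundary is itself Kellogg--Warschawski, so you may as well cite the latter directly for $\Psi$.
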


We will first demonstrate how Proposition \ref{P4.1} follows from Lemma \ref{L4.2}.

\begin{proof}[Proof of Proposition \ref{P4.1}]
By Lemma \ref{L4.2}, $\psi^*g_2=\rho g_1$ with a function $0<\rho\in C^\infty(M)$ satisfying the boundary condition $\rho|_\Gamma=1$. We have to prove that $\rho\equiv1$. To this end we will demonstrate that $\rho$ cannot take its maximum at an inner point belonging to
$\stackrel\circ M$. The same is true for $1/\rho$, i.e., $\rho$ cannot take its minimum at an inner point. Together with the boundary condition $\rho|_\Gamma=1$, this gives $\rho\equiv1$.

Fix a point $p\in\stackrel\circ M$ and set $q=\psi(p)\in\stackrel\circ M$. Choose a flat chart
$I_2:U_2\to W\subset{\mathbb C}$ of the flat surface $(M,g_2)$ in a neighborhood  of $q$ (see \eqref{3.4}). Then choose a flat chart $I_1:U_1\to V\subset{\mathbb C}$ of the flat surface $(M,g_1)$ in a neighborhood  of $p$ such that $\psi^{-1}(U_2)\subset U_1$. In these charts the conformal map $\psi$ is expressed by $(I_2\circ\psi\circ I_1^{-1})(z)=w(z)$, where $w(z)$ is a holomorphic function on
$V\subset{\mathbb C}$. The function $\rho$ satisfies $(\rho\circ I_1^{-1})(z)=|w'(z)|$ for $z\in V$, where $w'$ is the complex derivative of $w$. The modulus $|w'(z)|$ of the holomorphic function $w'$ cannot take its maximum at a point $z\in V$. In particular, $p$ is not a maximum point of the function $\rho$.
\end{proof}

\begin{lemma} \label{L4.3}
Let $D\subset{\mathbb C}$ be an open set and $\gamma\subset\partial D$ be a $C^\infty$-smooth curve lying on the boundary of $D$. Assume a function $f\in C(D\cup\gamma)$ be such that the restriction $f|_D$ is a holomorhic function and the restriction $f|_\gamma$ is $C^\infty$-smooth. Then, for every inner point $\zeta_0\in\gamma$, there exists a neighborhood $U$ such that all derivatives of $f$ are bounded in $U\cap D$, i.e. for every $n$ there exists a constant $C_n$ such that
\begin{equation}
|f^{(n)}(z)|\le C_n\quad(z\in U\cap D).
                                \label{4.1}
\end{equation}
\end{lemma}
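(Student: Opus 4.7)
The plan is to prove the stronger claim that $f$ extends to a $C^\infty$-function in some neighbourhood (inside $D\cup\gamma$) of any interior point $\zeta_0\in\gamma$; the bounds \eqref{4.1} then follow immediately by taking $U$ with compact closure inside the extension's domain.

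First I would localise: choose a disk $B=B(\zeta_0,r)$ so small that $\gamma\cap B$ is a $C^\infty$-arc splitting $B$ into two open pieces, $B^+\subset D$ and $B^-=B\setminus\overline{B^+}$, and pick a $C^\infty$-defining function $\rho\in C^\infty(B)$ with $B^+=\{\rho>0\}$, $\gamma\cap B=\{\rho=0\}$ and $d\rho\neq 0$ on $\gamma$. By a Borel--Whitney construction (form the formal Taylor expansion in $\rho$ of a hypothetical holomorphic extension of $\phi:=f|_{\gamma\cap B}$, and sum its terms against cutoffs $\chi(\rho/\varepsilon_k)$ with $\varepsilon_k\to 0$), one obtains an \emph{almost-holomorphic extension} $\tilde f\in C^\infty(B)$: that is, $\tilde f|_\gamma=\phi$ and $\bar\partial\tilde f$ vanishes to infinite order on $\gamma$.

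Next, put $g:=f-\tilde f$ on $B^+$. Since $\tilde f|_\gamma=\phi=f|_\gamma$, $g$ is continuous on $B^+\cup(\gamma\cap B)$ with zero boundary values on $\gamma$. Extend $g$ by zero: $\tilde g:=g$ on $B^+$, $\tilde g:=0$ on $B\setminus B^+$; then $\tilde g\in C(B)$. Extend $-\bar\partial\tilde f|_{B^+}$ by zero in the same way to a function $\omega$ on $B$; the infinite-order vanishing of $\bar\partial\tilde f$ on $\gamma$ guarantees $\omega\in C^\infty(B)$. An integration by parts against any test function $\psi\in C_c^\infty(B)$ shows that the continuity of $\tilde g$ across $\gamma$ kills all boundary contributions, so $\bar\partial\tilde g=\omega$ in the sense of distributions on all of $B$. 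Since $\bar\partial$ is elliptic, $\tilde g\in C^\infty(B)$. Hence $f=\tilde g+\tilde f$ extends as a $C^\infty$-function across $\gamma\cap B$, and taking $U\subset\subset B$ yields \eqref{4.1} on $U\cap D$ for every $n$.

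The main obstacle I anticipate is the Borel--Whitney step. Because $\phi$ is only smooth, the formal Taylor series in $\rho$ of a putative holomorphic extension generally fails to converge, and one must carry out the classical Borel summation with shrinking cutoff scales $\varepsilon_k$, verifying both that the resulting sum is $C^\infty$ up to $\gamma$ and that its $\bar\partial$ vanishes to infinite order on $\gamma$. Once $\tilde f$ is in hand, the extension-by-zero argument and the invocation of hypoellipticity of $\bar\partial$ are essentially automatic.
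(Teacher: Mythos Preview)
Your approach is correct and takes a genuinely different route from the paper's. The paper argues directly from the Cauchy integral formula for $f^{(n)}(z)$: it splits the contour into the arc $\gamma|_{[a,b]}$ and a circular arc $\delta$ staying at distance $\ge r$ from $z$, bounds the integral over $\delta$ trivially, and then tames the singular integral over $\gamma$ by repeated integration by parts in the arc-length parameter (using $(\zeta(s)-z)^{-k-1}=c_k\,\zeta'(s)^{-1}\frac{d}{ds}(\zeta(s)-z)^{-k}$) until only a logarithmic singularity remains, which is integrable uniformly in $z$. This is entirely elementary---nothing beyond the Cauchy formula and one-variable calculus---but the bookkeeping is long and must be redone for each $n$.

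Your argument instead packages everything into the hypoellipticity of $\bar\partial$: build an almost-analytic extension $\tilde f$ of the boundary datum, subtract it, extend the difference by zero across $\gamma$, and observe that the result satisfies $\bar\partial\tilde g=\omega\in C^\infty(B)$ in the distributional sense, whence $\tilde g\in C^\infty(B)$. This is shorter, handles all $n$ at once, and actually yields the stronger conclusion that $f$ extends to a $C^\infty$ function across $\gamma$, not merely that its one-sided derivatives are bounded. The price is the machinery invoked (the Borel summation for $\tilde f$, elliptic regularity for the conclusion). One step deserves an extra sentence of care: in verifying $\bar\partial\tilde g=\omega$ by integration by parts, the boundary term on $\gamma$ must be shown to vanish although $g=f-\tilde f$ is a priori only continuous (not $C^1$) up to $\gamma$; this is handled by exhausting $B^+$ by $\{\rho>\varepsilon\}$ and letting $\varepsilon\to 0$, using that $g\to 0$ uniformly on $\gamma$ and that $\bar\partial g=-\bar\partial\tilde f$ is bounded up to $\gamma$.
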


{\bf Remark.} There are many results on boundary regularity of holomorphic functions and of conformal maps in complex analysis. But I have not found a statement implying Lemma \ref{L4.3}.

\begin{proof}[Proof of Lemma \ref{L4.3}]
Let $\zeta(s)\ (A\le s\le B)$ be the parametrization of $\gamma$ by the arc length measured from $\zeta_0$, i.e., $\zeta_0=\zeta(0)$. Choose a sufficiently small $r>0$ such that the curve $\gamma$ intersects the circle $\{z\mid|z-\zeta_0|=2r\}$ at two points $\zeta(a),\zeta(b)$ and the segment $\zeta|_{[a,b]}$ separates the disc $B_{2r}(\zeta_0)=\{z\mid|z-\zeta_0|<2r\}$ to two ``semidiscs'' $B^+_{2r}(\zeta_0)$ and $B^-_{2r}(\zeta_0)$. Besides that $\overline{B^+_{2r}(\zeta_0)}\subset D\cup\gamma$ for a sufficiently small $r>0$. The boundary of the ``upper semidisc'' $B^+_{2r}(\zeta_0)$ consists of the segment $\zeta|_{[a,b]}$ of the curve $\gamma$ and of the arc $\delta$ of the circle $\{z\mid|z-\zeta_0|=2r\}$, see Fig. \ref{Lemma4.2}. We set $U=B_{r}(\zeta_0)=\{z\mid|z-\zeta_0|<r\}$.


\begin{figure}
    \centering
    \includegraphics[width=1.15\linewidth]{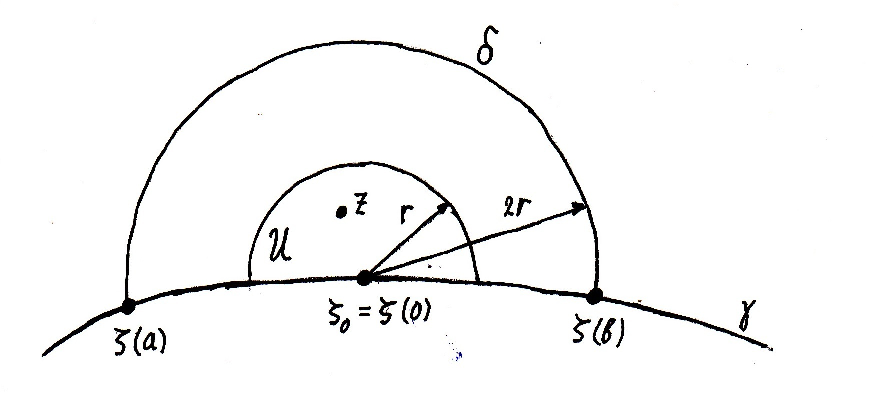}
     \caption{To the proof of Lemma \ref{L4.3}}
    \label{Lemma4.2}
\end{figure}

For $z\in U\cap D$, by the Cauchy formula,
\begin{equation}
f^{(n)}(z)=\frac{n!}{2\pi i}\int\limits_{\gamma|_{[a,b]}}\frac{f(\zeta)\,d\zeta}{(\zeta-z)^{n+1}}
+\frac{n!}{2\pi i}\int\limits_\delta\frac{f(\zeta)\,d\zeta}{(\zeta-z)^{n+1}},
                                \label{4.2}
\end{equation}
where $\gamma|_{[a,b]}$ is the segment of $\gamma$ with the end points $\zeta(a)$ and $\zeta(b)$ and $\delta$ is the arc of the circle $\{z\mid|z-\zeta_0|=2r\}$ with the end points $\zeta(a)$ and $\zeta(b)$.
The second integral on the right-hand side of \eqref{4.2} can be easily estimated by a constant independent of $z$. Indeed, $|\zeta-z|\ge r$ for $\zeta\in\delta$ as is seen from Fig. \ref{Lemma4.2}. Therefore
\begin{equation}
\left|\frac{n!}{2\pi i}\int\limits_\delta\frac{f(\zeta)\,d\zeta}{(\zeta-z)^{n+1}}\right|\le C_n^2
                                \label{4.3}
\end{equation}
with the constant
$$
C_n^2=\frac{n!}{2\pi}\max\limits_{\zeta\in\delta}|f(\zeta)|\,\frac{\mbox{length}(\delta)}{r^{n+1}}\le
\frac{n!}{r^n}\max\limits_{\zeta\in\delta}|f(\zeta)|.
$$

To estimate the first integral on the right-hand side of \eqref{4.2}, we first change the integration variable by $\zeta=\zeta(s)$
$$
\int\limits_{\gamma|_{[a,b]}}\frac{f(\zeta)\,d\zeta}{(\zeta-z)^{n+1}}
=\int\limits_a^b \frac{f(\zeta(s))\zeta'(s)}{(\zeta(s)-z)^{n+1}}\,ds.
$$
By hypotheses of the lemma, the function $\varphi(s)=f(\zeta(s))\zeta'(s)$ belongs to $C^\infty([a,b])$ and does not depend on $z$. The previous formula can be written as
\begin{equation}
\int\limits_{\gamma|_{[a,b]}}\frac{f(\zeta)\,d\zeta}{(\zeta-z)^{n+1}}
=\int\limits_a^b \varphi(s)\,\frac{ds}{(\zeta(s)-z)^{n+1}}.
                                \label{4.4}
\end{equation}
We have to estimate the integral on the right-hand side of \eqref{4.4} by a constant independent of $z$. We will present the estimation in all details for $n=1$, and then just mention the possibility of the same approach for a general $n$.

In the case of $n=1$ the formula looks as follows:
\begin{equation}
\int\limits_{\gamma|_{[a,b]}}\frac{f(\zeta)\,d\zeta}{(\zeta-z)^2}
=\int\limits_a^b \varphi(s)\,\frac{ds}{(\zeta(s)-z)^2}.
                                \label{4.5}
\end{equation}
We notice that $(\zeta(s)-z)^{-2}=-(\zeta'(s))^{-1}\frac{d}{ds}\Big((\zeta(s)-z)^{-1}\Big)$ and transform the integral on the right-hand side of \eqref{4.5} with the help of integration by parts
$$
\begin{aligned}
\int\limits_{\gamma|_{[a,b]}}\frac{f(\zeta)\,d\zeta}{(\zeta-z)^2}
&=-\int\limits_a^b \frac{\varphi(s)}{\zeta'(s)}\,\frac{d}{ds}\Big(\frac{1}{\zeta(s)-z}\Big)\,ds\\
&=\int\limits_a^b \frac{\varphi'(s)\zeta'(s)-\varphi(s)\zeta''(s)}{(\zeta'(s))^2}\,\frac{ds}{\zeta(s)-z}
+\frac{\varphi(a)}{\zeta'(a)}\,\frac{1}{\zeta(a)-z}-\frac{\varphi(b)}{\zeta'(b)}\,\frac{1}{\zeta(b)-z}.
\end{aligned}
$$
We write this in the form
\begin{equation}
\int\limits_{\gamma|_{[a,b]}}\frac{f(\zeta)\,d\zeta}{(\zeta-z)^2}
=\int\limits_a^b \psi(s)\,\frac{ds}{\zeta(s)-z}
+\frac{c_a^1}{\zeta(a)-z}+\frac{c_b^1}{\zeta(b)-z},
                                \label{4.6}
\end{equation}
where
\begin{equation}
\psi(s)=\frac{\varphi'(s)\zeta'(s)-\varphi(s)\zeta''(s)}{(\zeta'(s))^2}
                                \label{4.7}
\end{equation}
and
$$
c_a^1=\frac{\varphi(a)}{\zeta'(a)},\quad c_b^1=-\frac{\varphi(b)}{\zeta'(b)}.
$$
We are not confused by the presence of $\zeta'$ in denominators since $|\zeta'(s)|=1$.

The integral on the right-hand side of \eqref{4.6} can be also transformed with the help of integration by parts. Indeed, since
$(\zeta(s)-z)^{-1}=(\zeta'(s))^{-1}\frac{d}{ds}\big(\ln(\zeta(s)-z)\big)$,
$$
\begin{aligned}
\int\limits_a^b \psi(s)\,&\frac{ds}{\zeta(s)-z}
=\int\limits_a^b \frac{\psi(s)}{\zeta'(s)}\,\frac{d}{ds}\big(\ln(\zeta(s)-z)\big)\,ds\\
&=-\int\limits_a^b\frac{d}{ds}\Big(\frac{\psi(s)}{\zeta'(s)}\Big)\ln(\zeta(s)-z)\,ds
-\frac{\psi(a)}{\zeta'(a)}\,\ln(\zeta(a)-z)+\frac{\psi(b)}{\zeta'(b)}\,\ln(\zeta(b)-z).
\end{aligned}
$$
Substituting this expression into \eqref{4.6}, we write the result in the form
\begin{equation}
\begin{aligned}
\int\limits_{\gamma|_{[a,b]}}\frac{f(\zeta)\,d\zeta}{(\zeta-z)^2}
&=\int\limits_a^b \varphi_1(s)\,\ln(\zeta(s)-z)\,ds\\
&+c_a^0\,\ln(\zeta(a)-z)+\frac{c_a^1}{\zeta(a)-z}+c_b^0\,\ln(\zeta(v)-z)+\frac{c_b^1}{\zeta(b)-z},
\end{aligned}
                                \label{4.8}
\end{equation}
where
\begin{equation}
\varphi_1(s)=-\frac{d}{ds}\Big(\frac{\psi(s)}{\zeta'(s)}\Big)
                                \label{4.9}
\end{equation}
and
$$
c_a^0=-\frac{\psi(a)}{\zeta'(a)},\quad c_b^0=-\frac{\psi(b)}{\zeta'(b)}.
$$

As is seen from Fig. \ref{Lemma4.2},
\begin{equation}
r<|\zeta(a)-z|<3r,\quad r<|\zeta(b)-z|<3r
                                \label{4.10}
\end{equation}
for $z\in U$. On assuming $0<3r<1$, the modulus of each term on the second line of \eqref{4.8} can be estimates by a constant independent of $z$. Thus, the only problem is estimating the integral on the right-hand side of \eqref{4.8}.

As is seen from \eqref{4.7} and \eqref{4.9}, the function $\varphi_1(s)$ is of the form
$$
\varphi_1(s)=\frac{\psi_1(s)}{(\zeta'(s))^4}
$$
with some $\psi_1\in C^\infty([a,b])$ independent of $z$. Since $|\zeta'(s)|=1$, this implies $|\varphi_1(s)|\le C$ with a constant $C$ independent of $z$. Therefore
$$
\left|\int\limits_a^b \varphi_1(s)\,\ln(\zeta(s)-z)\,ds\right|\le
C\int\limits_a^b |\ln(\zeta(s)-z)|\,ds.
$$
Since
$$
|\ln(\zeta(s)-z)|=\Big((\ln|\zeta(s)-z|)^2+(\arg(\zeta(s)-z))^2\Big)^{1/2}\le
\Big((\ln|\zeta(s)-z|)^2+4\pi^2\Big)^{1/2},
$$
this implies
\begin{equation}
\left|\int\limits_a^b \varphi_1(s)\,\ln(\zeta(s)-z)\,ds\right|\le
C\int\limits_a^b \Big((\ln|\zeta(s)-z|)^2+4\pi^2\Big)^{1/2}\,ds.
                                \label{4.11}
\end{equation}

To estimate the integral on the right-hand side of \eqref{4.11}, we are going to change the integration variable by $t=|\zeta(s)-z|$. This needs some justification. First of all $|dt|=ds$ since $|\zeta'(s)|=1$. Let $d(z)>0$ be the distance from $z$ to the curve $\gamma$. When $s$ runs from $a$ to $b$, the variable $t$ changes from $|\zeta(a)-z|$ to $d(z)=|\zeta(s_0)-z|$ for some $s_0\in(a,b)$, and then changes from $d(z)$ to $|\zeta(a)-z|$. As is seen from Fig. \ref{Lemma4.2}, for a sufficiently small $r$, the derivative $dt/ds$ is negative for $s\in[a,s_0]$, and the derivative $dt/ds$ is positive for $s\in[s_0,b]$. Therefore
$$
\int\limits_a^b \Big((\ln|\zeta(s)-z|)^2+4\pi^2\Big)^{1/2}\,ds
=\int\limits_{d(z)}^{|\zeta(a)-z|} \Big((\ln|t|)^2+4\pi^2\Big)^{1/2}\,dt
+\int\limits_{d(z)}^{|\zeta(b)-z|} \Big((\ln|t|)^2+4\pi^2\Big)^{1/2}\,dt.
$$
This implies with the help of \eqref{4.10}
\begin{equation}
\int\limits_a^b \Big((\ln|\zeta(s)-z|)^2+4\pi^2\Big)^{1/2}\,ds
\le\int\limits_0^{3r} \Big((\ln|t|)^2+4\pi^2\Big)^{1/2}\,dt.
                                \label{4.12}
\end{equation}
The integral on the right-hand side of \eqref{4.12} is finite and independent of $z$. Denoting the value of the integral by $c$, we obtain from \eqref{4.11}--\eqref{4.12}
\begin{equation}
\left|\int\limits_a^b \varphi_1(s)\,\ln(\zeta(s)-z)\,ds\right|\le cC.
                                \label{4.13}
\end{equation}

As we have mentioned, the modulus of every term on the second line of \eqref{4.8} can be estimates by a constant independent of $z$. Therefore \eqref{4.8} and \eqref{4.13} give
\begin{equation}
\left|\int\limits_{\gamma|_{[a,b]}}\frac{f(\zeta)\,d\zeta}{(\zeta-z)^2}\right|\le C
                                \label{4.14}
\end{equation}
with some new constant $C$ independent of $z$.

Finally, combining \eqref{4.2}--\eqref{4.3} and \eqref{4.14}, we obtain \eqref{4.1} for $n=1$.

In the case of a general $n$, we transform the first integral on the right-hand side of \eqref{4.2} by applying the integration by parts $n+1$ times. In this way we obtain the following analog of \eqref{4.8}:
$$
\begin{aligned}
\int\limits_{\gamma|_{[a,b]}}&\frac{f(\zeta)\,d\zeta}{(\zeta-z)^{n+1}}
=\int\limits_a^b \varphi_n(s)\,\ln(\zeta(s)-z)\,ds\\
&+c_{a,n}^0\,\ln(\zeta(a)-z)+\sum\limits_{k=1}^n\frac{c_{a,n}^k}{(\zeta(a)-z)^k}+c_{b,n}^0\,\ln(\zeta(v)-z)
+\sum\limits_{k=1}^n\frac{c_{b,n}^k}{(\zeta(b)-z)^k}.
\end{aligned}
$$
All other arguments are the same.
\end{proof}

\begin{proof}[Proof of Lemma \ref{L4.2}]
Let hypotheses of Proposition \ref{P4.1} be satisfied. Being a conformal map, the restriction
$\psi|_{\stackrel\circ M}:\stackrel\circ M\to\stackrel\circ M$ is a smooth map. In particular, the differential $d\psi(p):T_pM\to T_pM$ is well defined for every point $p\in\stackrel\circ M$ and smoothly depends on $p$.

Let $TM$ be the tangent bundle of $M$ and $\otimes_r^sTM$ be the bundle of $(r+s)$-tensors that are $r$ times covariant and $s$ times contravariant ($r$ and $s$ are non-negative integers). Let $C^\infty(\otimes_r^sTM)$ be the space of smooth sections of $\otimes_r^sTM$, and let $\otimes_r^sTM|_{\stackrel\circ M}$ be the restriction of $\otimes_r^sTM$ to $\stackrel\circ M$. The Levi-Chivita connection of the Riemannian manifold $(M,g_1)$ determines the covariant derivative
$\nabla:C^\infty(\otimes_r^sTM)\to C^\infty(\otimes_{r+1}^sTM)$. The higher order derivatives
$\nabla^{(n)}:C^\infty(\otimes_r^sTM)\to C^\infty(\otimes_{r+n}^sTM)$ are also well defined. Finally, $\otimes_r^sTM$ is a Riemannian vector bundle, i.e., the norm $|u|^2\in C^\infty(M)$ is well defined for $u\in C^\infty(\otimes_r^sTM)$.

We consider the differential $d\psi$ as a smooth tensor field on $\stackrel\circ M$, i.e.,
$d\psi\in C^\infty(\otimes_1^1TM|_{\stackrel\circ M})$. We have to prove that it extends to a smooth tensor field on the whole of $M$.
To this end we will first prove that, for every integer $n\ge0$, the norm $|\nabla^{(n)}(d\psi)|$ is bounded on $\stackrel\circ M$. Since $M$ is compact, it suffices to prove the local boundedness: for every point $p\in\Gamma$, there exists a neighborhood $U$ such that the norm $|\nabla^{(n)}(d\psi)|$ is bounded on $U\cap\stackrel\circ M$.

Fix a point $p\in\Gamma$.
For $j=1,2$, let $I_j:(U_j,g_j)\to(D_j,e)$ be a flat chart of the flat surface $(M,g_j)$ in a neighborhood of $p$, see \eqref{3.5}. Recall that the domain $D_j\subset{\mathbb C}$ contains a smooth curve $\gamma_j=I_j(U_j\cap\Gamma)$ on its boundary such that $D_j\setminus\gamma_j$ is an open set and the restriction $I_j|_{U_j\cap\Gamma)}:U_j\cap\Gamma\to\gamma_j$ preserves arc lengths measured in metrics $g_j$ and $e$ respectively. Besides that, the neighborhoods can be chosen so that $\psi^{-1}(U_2)\subset U_1$; it is possible since $\psi|_\Gamma=\mbox{Id}$.

Define the map
$
f=I_2\circ\psi\circ I_1^{-1}:D_1\to D_2\subset{\mathbb C}.
$
It is continuous since $\psi$ is a homeomorphism. The restriction $f|_{D_1\setminus\gamma_1}$ is a holomorphic function since
$\psi|_{\stackrel\circ M}:(\stackrel\circ M,g_1)\to(\stackrel\circ M,g_2)$ is a conformal map while $I_1$ and $I_2$ are isometries.
Finally, the restriction $f|_{\gamma_1}:\gamma_1\to{\mathbb C}$ is $C^\infty$ smooth since $\psi|_\Gamma=\mbox{Id}$ while $I_1$ and $I_2$ are smooth. Thus, the function $f$ satisfies hypotheses of Lemma \ref{L4.3}, where one has to set $D=D_1\setminus\gamma_1$ and $\zeta_0=I_1(p)$. Applying the lemma, we can state that all derivatives of $f$ are bounded in $U\cap(D_1\setminus\gamma_1)$ for some neighborhood $U$ of $I_1(p)$.

Covariant derivatives of $d\psi$ in $U_1$ are expressed through derivatives of $f$ and wise versa. Coordinate representations of $I_1$ and $I_2$ participate in the representations, but we do not worry on this since $I_1$ and $I_2$ are smooth up to the boundary. Thus, for every point $p\in\Gamma$, all covariant derivatives of $d\psi$ are bounded in $U\cap\stackrel\circ M$ for some neighborhood $U$ of $p$. Hence, for every integer $n\ge0$, the norm $|\nabla^{(n)}(d\psi)|$ is bounded on $\stackrel\circ M$.

Recall that, for a smooth vector bundle $\xi$ over a compact manifold and for an integer $k\ge0$, the Sobolev space $H^k(\xi)$ consists of sections whose partial derivatives of order $\le k$ are quadratically integrable. The intersection $\bigcap_{k=0}^\infty H^k(\xi)$ coincides with the space $C^\infty(\xi)$ of smooth sections. For a general vector bundle, the definition of Sobolev spaces needs some specification since partial derivatives are defined locally, in the domain of a local coordinate system. The definition is easier for the tensor bundle $\otimes_r^sTM$ over a Riemannian manifold $(M,g)$: a tensor field $u$ belongs to $H^k(\otimes_r^sTM)$ iff the norms $|\nabla^{(n)}(d\psi)|$ are quadratically integrable over $M$ for all $n\le k$.

Returning to the proof of the lemma, we have shown that the norms $|\nabla^{(n)}(d\psi)|$ are bounded on $\stackrel\circ M$ for all $n$. Since $M$ is compact, all norms $|\nabla^{(n)}(d\psi)|$ are quadratically integrable over $M$. Hence $d\psi\in H^k(\otimes_1^1TM)$ for every $k$, i.e., $d\psi\in C^\infty(\otimes_1^1TM)$. This means that $\psi$ is $C^\infty$-smooth on the whole of $M$. The same is true for $\psi^{-1}$.
\end{proof}

\section{Flat Riemann surfaces}

Probably, some experts in complex analysis know the following statement but I have no reference.

\begin{lemma} \label{L5.1}
Let $D_r=\{z=x+iy\in{\mathbb C}\mid y\ge0,|z|<r\}$ with some $r>0$. Let real functions $u,v\in C^\infty(D_r)$ be harmonic in
${\stackrel\circ D}_r=\{z=x+iy\mid y>0,|z|<r\}$ and satisfy the Couchy -- Riemann equations for $y=0$, i.e.,
\begin{equation}
u'_x(x,0)=v'_y(x,0),\quad u'_y(x,0)=-v'_x(x,0)\quad(|x|<r).
                                \label{5.1}
\end{equation}
Then the functions $u$ and $v$ satisfy the Couchy -- Riemann equations on the whole of $D_r$, i.e., $u+iv$ is a holomorphic in $D_r$ function.
\end{lemma}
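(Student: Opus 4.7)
The plan is to derive the lemma from the Schwarz reflection principle applied to the $\bar z$-derivative of $f=u+iv$. Since $u$ and $v$ are harmonic in $\stackrel\circ D_r$, we have $f_{z\bar z}=\tfrac14\Delta f=0$ there, so the function
$$
\Phi(z)=\overline{f_{\bar z}(z)}=\tfrac12\bigl((u_x-v_y)-i(u_y+v_x)\bigr)
$$
satisfies $\Phi_{\bar z}=\overline{f_{z\bar z}}=0$ in $\stackrel\circ D_r$, that is, $\Phi$ is holomorphic in the open upper half disc. Because $u,v\in C^\infty(D_r)$, $\Phi$ extends continuously (in fact smoothly) to the diameter $\{y=0\}$, and the assumed Cauchy--Riemann conditions \eqref{5.1} on $y=0$ are precisely the statement that $\Phi(x,0)=0$ for $|x|<r$.

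Next, I would reflect across the diameter. Define $\tilde\Phi$ on $B_r(0)=\{|z|<r\}$ by $\tilde\Phi(z)=\Phi(z)$ for $\mathrm{Im}\,z\ge 0$ and $\tilde\Phi(z)=\overline{\Phi(\bar z)}$ for $\mathrm{Im}\,z<0$. The two definitions agree on the diameter (both give $0$), so $\tilde\Phi$ is continuous on $B_r(0)$ and holomorphic off the diameter; by Morera's theorem it is holomorphic on the whole of $B_r(0)$. Since $\tilde\Phi$ vanishes on the open interval $(-r,r)\subset B_r(0)$, the identity theorem forces $\tilde\Phi\equiv 0$, and in particular $f_{\bar z}\equiv 0$ on $D_r$. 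This is exactly the holomorphy of $u+iv$ on $D_r$.

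I do not anticipate a serious obstacle. The only step that deserves care is justifying the reflection itself, but continuity of $\Phi$ up to the diameter is immediate from $u,v\in C^\infty(D_r)$, and the vanishing of $\Phi$ on the diameter is exactly what the hypothesis \eqref{5.1} supplies, so the standard Schwarz--Morera argument applies without modification.
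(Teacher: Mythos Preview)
Your argument is correct. The paper proceeds differently: it first takes a harmonic conjugate $\tilde v$ of $u$ on the simply connected domain $D_r$, observes from \eqref{5.1} that $d(v-\tilde v)=0$ on the diameter, then repeats the construction to obtain a holomorphic function $w=(u-\tilde u)+i(v-\tilde v)$ whose differential vanishes on the diameter; the uniqueness principle forces $w$ to be constant, whence $v-\tilde v$ is constant and $u+iv$ is holomorphic. Your route is more direct: rather than building auxiliary conjugates, you recognize that $\overline{f_{\bar z}}$ is itself holomorphic in $\stackrel\circ D_r$ (since $\Delta f=0$), vanishes on the diameter by \eqref{5.1}, and hence vanishes identically by Schwarz reflection and the identity theorem. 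Both proofs ultimately rest on the identity principle; yours reaches it in fewer steps by exploiting the $\bar\partial$-formalism, while the paper's argument stays within the language of real harmonic conjugates and avoids the reflection step.
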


\begin{proof}
Since $D_r$ is a simply connected domain, there exists a real harmonic function $\widetilde v\in C^\infty(D_r)$ conjugate to $u$. It suffices to prove that
$v-\widetilde v=\mbox{const}$ in $D_r$. The Cauchy -- Riemann equations
$
u'_x={\widetilde v}'_y,\ u'_y=-{\widetilde v}'_x
$
hold on the whole of $D_r$. Together with \eqref{5.1}, the latter equations give
\begin{equation}
d(v-\tilde v)=0\quad\mbox{at}\quad y=0.
                                \label{5.2}
\end{equation}
Repeating the same arguments, we find a real harmonic function $\widetilde u\in C^\infty(D_r)$ such that  $w=(u-\widetilde u)+i(v-\widetilde v)$ is a holomorphic function in $\mbox{int}\,D_r$. The Cauchy -- Riemann equations
$
(u-\widetilde u)'_x=(v-\widetilde v)'_y,\ (u-\widetilde u)'_y=-(v-\widetilde v)'_x
$
hold on the whole of $D_r$. Together with \eqref{5.2}, the latter equations give $dw=0$ at $y=0$, i.e.,
$
w=\mbox{const}\quad\mbox{at}\quad y=0.
$
By the uniqueness principle for holomorphic functions, $w=\mbox{const}$ on the whole of $D_r$. In particular,
$v-\widetilde v=\mbox{const}$ in $D_r$.
\end{proof}

The Cauchy -- Riemann equations $u'_x=v'_y, u'_y=-v'_x$ make sense globally on a flat oriented metric surface, although partial derivatives are sensible only locally in the domain of a flat chart. In particular, {\it holomorphic functions} are well defined on such a surface. Lemma \ref{L5.1} is generalized as follows.

\begin{lemma} \label{L5.2}
Let $(M,g)$ be a flat oriented metric surface with non-empty boundary $\Gamma=\partial M$. If two real harmonic functions
$u,v\in C^\infty(M)$ satisfy the Cauchy -- Riemann equations on a curve $\gamma\subset\Gamma$ of a positive length, then $u+iv$ is a holomorphic function in $M$.
\end{lemma}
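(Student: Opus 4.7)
The strategy is to localize around an interior point of $\gamma$ and reduce to Lemma \ref{L5.1} via a conformal straightening of the boundary, then propagate the conclusion to all of $M$ via the identity theorem for holomorphic sections on the connected Riemann surface $\stackrel\circ M$.

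First I would localize. Fix a point $p$ in the relative interior of $\gamma$ and a flat chart $I:(U,g)\to(D,e)$ of $(M,g)$ at $p$ as in \eqref{3.5}, so that $\tilde\gamma=I(U\cap\gamma)$ is a $C^\infty$ arc on $\partial D$ with $I(p)$ in its relative interior. The pushforwards $\tilde u=u\circ I^{-1}$ and $\tilde v=v\circ I^{-1}$ are harmonic on the open side of $\tilde\gamma$ inside $I(U)$, smooth up to $\tilde\gamma$, and satisfy the Cauchy--Riemann equations on $\tilde\gamma$. Since $\tilde\gamma$ is $C^\infty$, boundary regularity of conformal maps (of Kellogg--Warschawski type) yields a biholomorphism $\Psi$ from a subregion $W$ of $I(U)$ near $I(p)$ onto the upper half-disk $D_r$ of Lemma \ref{L5.1}, smooth up to the boundary and taking a sub-arc of $\tilde\gamma$ onto the real-axis diameter. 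Biholomorphisms preserve both harmonicity and the Cauchy--Riemann equations, so $\tilde u\circ\Psi^{-1}$ and $\tilde v\circ\Psi^{-1}$ satisfy the hypotheses of Lemma \ref{L5.1}. Applying that lemma and pulling back by $\Psi$ and then $I$ gives that $u+iv$ is holomorphic in an open neighborhood of $p$ in $M$.

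Next I would globalize. In an arbitrary flat chart define
$$
F=(u_x-v_y)-i(u_y+v_x).
$$
A direct calculation using $\Delta u=\Delta v=0$ gives $F_x+iF_y=\Delta u-i\Delta v=0$, so $F$ is holomorphic in every flat chart. Transition maps between flat charts are compositions of a translation and a rotation by some angle $\theta$, under which $F$ transforms by the unimodular factor $e^{i\theta}$; hence $F$ is a holomorphic section of the canonical bundle $K$ over $\stackrel\circ M$, and its zero locus is intrinsically defined. By the localization step $F$ vanishes on the nonempty open subset $I^{-1}(W)\cap\stackrel\circ M$ of the connected Riemann surface $\stackrel\circ M$, so the identity theorem for holomorphic sections forces $F\equiv 0$ on $\stackrel\circ M$. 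This is exactly the Cauchy--Riemann system for $u+iv$ on $\stackrel\circ M$, and continuity of $du,dv$ up to $\partial M$ (from $u,v\in C^\infty(M)$) extends it to all of $M$.

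The main obstacle is the localization step: the arc $\tilde\gamma$ is only $C^\infty$, not real analytic, so the Schwarz reflection principle is not directly available, and one must invoke $C^\infty$ boundary regularity of the straightening conformal map $\Psi$. An equivalent alternative is to prove a boundary uniqueness theorem for holomorphic functions on planar domains with smooth boundary arcs directly and apply it to $F$ rather than routing through Lemma \ref{L5.1}; either way, boundary regularity for non-analytic arcs is the technical crux.
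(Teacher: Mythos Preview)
Your proof is correct and follows the same two-step architecture as the paper: localize near a point of $\gamma$ by conformally straightening the boundary arc to a half-disk and invoking Lemma~\ref{L5.1}, then globalize by unique continuation on the connected surface. The only differences are cosmetic: you are explicit about the $C^\infty$ boundary regularity (Kellogg--Warschawski) needed for the straightening map, where the paper simply cites ``the Riemann theorem''; and for the globalization you package the obstruction as the single holomorphic section $F=2\partial_z\overline{w}$ of the canonical bundle and invoke the identity theorem, whereas the paper treats the real harmonic functions $u'_x-v'_y$ and $u'_y+v'_x$ separately in each flat chart, applies the uniqueness principle for harmonic functions, and propagates through a chain of overlapping charts.
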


\begin{proof}
Fix a point $p\in\gamma$. We first will demonstrate that the restriction of $u+iv$ to some neighborhood $V\subset M$ of $p$ is a holomorphic function in $V$. To this end we choose a simply connected neighborhood $V$ of $p$ such that $V\cap\Gamma\subset\gamma$ and there exists a biholomorphism $\varphi:V\to D_r$ onto the semi-disk $D_r\subset{\mathbb C}$ like in Lemma \ref{L5.1}; the existence of such a biholomorphism is guaranteed by the Riemann theorem. The functions $\tilde u=u\circ\varphi^{-1}$ and $\tilde v=v\circ\varphi^{-1}$ are harmonic in $D_r$ and satisfy the Cauchy -- Riemann equations at $y=0$. By Lemma \ref{L5.1}, $\tilde u+i\tilde v$ is a holomorphic function in $D_r$. Hence  $u+iv=(\tilde u+i\tilde v)\circ\varphi$ is a holomorphic function in $V$ since the composition of a holomorphic function and conformal map is again a holomorphic function.

Let now $\{(U_j,I_j=z_j=x_j+iy_j)\}_{j\in J}$ be a flat atlas on $M$. For every chart $(U,z)$ of the atlas let $u(x,y)$ and $v(x,y)$ be representations of our harmonic functions $u$ and $v$ in the chart. In the domain $U$, the Cauchy -- Riemann equations can be written in coordinates:
$$
u'_x-v'_y=0,\quad u'_y+v'_x=0.
$$
Left-hand sides of the equations are harmonic in $U$ functions. If we assumed the existence of a non-empty open subset
$V\subset U$ such that the Cauchy -- Riemann equations are valid in $V$, then Cauchy -- Riemann equations would be valid on the whole of $U$ in virtue of the uniqueness principle for harmonic functions. In the latter case, the restriction of $u+iv$ to $U$ is a holomorphic function.

Let $p\in U_{j_0}$ for some chart $(U_{j_0},z_{j_0})$ of the atlas. By arguments of two previous paragraphs, the restriction of $u+iv$ to $U_{j_0}$ is a holomorphic function. Passing from $(U_{j_0},z_{j_0})$ to an arbitrary chart $(U_j,z_j)$ of the atlas with the help of a chart chain with non-empty intersections of domains, we convince ourselves that $u+iv$ is a holomorphic function on the whole of $M$.
\end{proof}

We recall the definition of a Riemann surface, compare with the definition 1.4 on page 3 of \cite{F}.

Let $M$ be a smooth two-dimensional manifold without boundary.
A {\it complex chart} on $M$ is a homeomorphism $ z:U\to V$ of an open subset $U\subset M$ onto an open subset $V\subset{\mathbb C}$.
Two complex charts $ z_j:U_j\to V_j,\ j=1,2$ are said to be {\it holomorphically compatible} if the map
\begin{equation}
 z_2\circ z_1^{-1}: z_1(U_1\cap U_2)\to z_2(U_1\cap U_2)
                                \label{5.3}
\end{equation}
is biholomorphic.

A {\it complex atlas} on $M$ is a system $\mathfrak{A}=\{ z_j:U_j\to V_j,\ j\in J\}$ of charts which are holomorphically compatible and which cover $M$, i.e., $\bigcup_{j\in J}U_j=M$.

Two complex atlases $\mathfrak{A}$ and ${\mathfrak A}'$ on $M$ are {\it analitically equivalent} if every chart of $\mathfrak{A}$ is holomorphically compatible with every chart of ${\mathfrak A}'$.

\begin{definition}
{\rm By a {\it complex structure} on a two-dimensional manifold $M$ without boundary we mean an equivalence class of analytically equivalent atlases on $M$.}
\end{definition}

A complex structure on $M$ can be given by the choice of a complex atlas. Every complex structure ${\mathcal C}$ on $M$ contains a unique maximal atlas $\mathfrak{A}^*$. If $\mathfrak{A}$ is an arbitrary atlas in ${\mathcal C}$, then $\mathfrak{A}^*$ consists of all complex charts on $M$ which are holomorphically compatible with every chart of $\mathfrak{A}$.

\begin{definition}
{\rm A {\it Riemann surface} is a pair $(M,{\mathcal C})$ where $M$ is a connected two-di\-men\-sio\-nal manifold without boundary and ${\mathcal C}$ is a complex structure on $M$.}
\end{definition}

One usually writes $M$ instead of $(M,{\mathcal C})$ whenever it is clear which complex structure ${\mathcal C}$ is meant. Sometimes one also writes $(M,{\mathfrak A})$ where ${\mathfrak A}$ is a representative of ${\mathcal C}$.

{\it Convention.} If $M$ is a Riemann surface, then by a chart on $M$ we always mean a complex chart belonging to the maximal atlas of the complex structure on $M$. Given such a chart $z:U\to V$, we also say that $z\in C^\infty(U)$ is a {\it local complex coordinate} on $M$ with the domain $U\subset M$.

\medskip

{\bf Remark.}
We emphasize that a complex structure has been defined on a two-dimen\-si\-on\-al manifold {\it with no boundary} only. Can the definition be generalized to two-dimensional manifolds with non-empty boundaries? Such a generalization is not not unique if possible. Therefore we never consider  a complex structure on a two-dimensional manifold with non-empty boundary.
Belishev in \cite{Be} discusses a {\it complex structure} on a compact metric surface with non-empty boundary, without giving a precise definition. In my opinion it is his systematic mistake.

\medskip

Next, we are going to introduce a very special case of a complex structure which, nevertheless, will play an important role in our arguments.

Let again $M$ be a smooth two-dimensional manifold without boundary.
Two complex charts $ z_j:U_j\to V_j,\ j=1,2$ on $M$ are said to be {\it linearly compatible} if the map \eqref{5.3} is of the form
$( z_2\circ z_1^{-1})(z)=az+b$ for $z\in  z_1(U_1\cap U_2)$, where $a$ and $b$ are some complex constants and moreover $|a|=1$. Then, following the same scheme, we define a {\it flat complex atlas} on $M$ as a family $\mathfrak{A}=\{ z_j:U_j\to V_j,\ j\in J\}$ of charts which are linearly compatible and which cover $M$. Two flat complex atlases $\mathfrak{A}$ and ${\mathfrak A}'$ on $M$ are {\it linearly equivalent} if every chart of $\mathfrak{A}$ is linearly compatible with every chart of ${\mathfrak A}'$.
By a {\it flat complex structure} on a two-dimensional manifold $M$ without boundary we mean an equivalence class of linearly equivalent flat atlases on $M$. And finally we introduce

\begin{definition}
{\rm A {\it flat Riemann surface} is a pair $(M,{\mathcal C})$ where $M$ is a connected two-di\-men\-sio\-nal manifold without boundary and ${\mathcal C}$ is a flat complex structure on $M$.}
\end{definition}

Flat complex structures are closely related to flat Riemannian metrics. We complete the section with the discussion of the relation.

Let $(M,{\mathcal C})$ be a flat Riemann surface. Given a complex chart $z=x+iy:U\to{\mathbb C}$ on $M$, the flat Riemannian metric $|dz|^2=dx^2+dy^2$ is defined on the domain $U\subset M$ of the complex coordinate $z$. If $z':U'\to{\mathbb C}$ is another complex chart on $M$, then $z'=az+b$ on $U\cap U'$ with some constants $a$ and $b$ such that $|a|=1$. This implies $|dz|^2=|dz'|^2$ on $U\cap U'$. Thus, the flat Riemannian metric $|dz|^2$ is globally defined on $M$. This metric  will be sometimes denoted by $|dz|^2=g_{\mathcal C}$ to emphasize that it is determined by the flat complex structure ${\mathcal C}$. Thus, every flat Riemann surface $(M,{\mathcal C})$ can be considered as a flat metric surface $(M,|dz|^2)=(M,g_{\mathcal C})$ (with no boundary).

Let now $(M,g)$ be a flat oriented metric surface with no boundary. Choose a flat atlas
$\mathfrak{A}=\{ I_j:(U_j,g)\to (V_j,e),\ j\in J\}$ of the flat metric surface $(M,g)$ such that the intersection $U_j\cap U_k$ is connected for all $j.k\in J$ and all $I_j$ preserve the orientation. Then $\mathfrak{A}$ is a flat complex atlas. Indeed, if $U_j\cap U_k\neq\emptyset$, then the map
$$
I_k\circ I_j^{-1}:I_j(U_j\cap U_k)\to I_k(U_j\cap U_k)
$$
is an orientation preserving isometry of planar domains. Hence
$$
(I_k\circ I_j^{-1})(z)=a_{jk}z+b_{jk}\quad (z\in U_j\cap U_k;a_{jk}=\mbox{const},b_{jk}=\mbox{const},|a_{jk}|=1).
$$
The same is true for arbitrary $j,k\in J$ since there exists a sequence $j=j_0,j_1,\dots,j_n=k$ such that $U_j\cap U_{j+1}\neq\emptyset$.
Hence $\mathfrak{A}$ is a flat complex atlas.
Thus, every flat oriented metric surface with no boundary becomes a flat Riemann surface in a canonical way.

Finally we consider a compact flat oriented metric surface $(M,g)$ with non-empty boundary. By Proposition \ref{P3.2}, $(M,g)$ can be isometrically embedded into a flat oriented metric surface $(M',g')$ with no boundary. The latter surface is furnished in a canonical way with the flat complex structure ${\mathcal C}'$ such that $g'=g_{{\mathcal C}'}$.

We have thus proved

\begin{proposition} \label{P5.6}
{\rm (1)} Every flat Riemann surface $(M,{\mathcal C})$ is furnished, in a canonical way, with a flat Riemannian metric $g_{\mathcal C}$ such that $g_{\mathcal C}=|dz|^2$ for any complex chart $z:U\to V$ of $(M,{\mathcal C})$.

{\rm (2)} Every flat oriented metric surface $(M,g)$ without boundary is furnished, in a canonical way, with a flat complex structure ${\mathcal C}$ such that $g=g_{\mathcal C}$.

{\rm (3)} Every flat compact oriented metric surface $(M,g)$ with non-empty boundary admits an isometric embedding into a flat Riemann surface $(M',{\mathcal C}')$ considered with the metric $g'=g_{{\mathcal C}'}$.
\end{proposition}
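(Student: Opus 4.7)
The plan is to verify each of the three parts in turn; each reduces to assembling the definitions and making one elementary observation, and the author's preceding discussion already indicates the route.

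For part (1), I would define $g_{\mathcal{C}}$ on the domain $U$ of each chart $z=x+iy$ of the maximal flat complex atlas by $g_{\mathcal{C}}|_U=|dz|^2=dx^2+dy^2$. The only thing to check is well-definedness on an overlap $U_1\cap U_2$: by linear compatibility the transition map has the form $z_2=az_1+b$ with $|a|=1$, so $dz_2=a\,dz_1$ and hence $|dz_2|^2=|a|^2|dz_1|^2=|dz_1|^2$. The resulting Riemannian metric is smooth and flat because in every chart it coincides with the Euclidean metric of the plane.

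For part (2), I would begin with a flat atlas $\mathfrak{A}=\{I_j:U_j\to V_j\}_{j\in J}$ of $(M,g)$, whose existence is recalled around \eqref{3.4}. After refining if necessary I may assume each $U_j$ is connected, each intersection $U_j\cap U_k$ is connected, and each $I_j$ preserves the chosen orientation; this is exactly where the orientation hypothesis is used. On any connected overlap the transition $I_k\circ I_j^{-1}$ is then an orientation-preserving isometry of a connected open subset of the Euclidean plane, hence an affine map $z\mapsto a_{jk}z+b_{jk}$ with $|a_{jk}|=1$ and $b_{jk}$ constant. Therefore $\mathfrak{A}$ is a flat complex atlas; its equivalence class defines a flat complex structure $\mathcal{C}$ on $M$, and the computation of part (1) shows $g_{\mathcal{C}}=g$.

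For part (3), I would simply invoke Proposition \ref{P3.2} to isometrically embed $(M,g)$ into a boundaryless flat metric surface $(M',g')$; the collar construction preserves orientability, so $M'$ is again oriented. Applying part (2) to $(M',g')$ furnishes the flat complex structure $\mathcal{C}'$ with $g'=g_{\mathcal{C}'}$, and the original embedding is then the required isometric embedding of $(M,g)$ into $(M',g_{\mathcal{C}'})$.

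I do not anticipate a real obstacle; the only delicate point is the refinement in part (2) guaranteeing that on each connected overlap the affine transition has a single well-defined pair of constants $(a_{jk},b_{jk})$, which is a standard atlas-refinement argument. Everything else is bookkeeping with the definitions introduced in Section 5.
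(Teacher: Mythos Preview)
Your proposal is correct and follows essentially the same route as the paper: the paper's proof is given in the paragraphs immediately preceding the proposition (it concludes with ``We have thus proved''), and each of your three steps---defining $g_{\mathcal C}$ locally as $|dz|^2$ and checking invariance under $z\mapsto az+b$ with $|a|=1$, refining a flat atlas to one with connected orientation-preserving charts and connected overlaps so that the transition maps become such affine maps, and invoking Proposition~\ref{P3.2} followed by part~(2)---matches the paper's argument point for point.
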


\section{Banach algebras ${\mathcal A}(M)$ and ${\mathcal A}(\Gamma)$}

Let $(M,g)$ be a compact oriented flat metric surface with non-empty boundary $\Gamma=\partial M$ consisting of $m\ge1$ boundary circles, see \eqref{2.8}.

We denote by ${\mathcal A}(M,g)$ the algebra of continuous complex-valued functions
$f\in C(M)$ such that the restriction of $f$ to the interior $\stackrel\circ M=M\setminus\Gamma$ is a holomorphic function.
Belishev \cite{Be} uses the notation ${\mathcal A}(M)$ for the latter algebra. Our notation ${\mathcal A}(M,g)$ is more precise since the  metric $g$ participates in the definition of a holomorphic function.
Elements of ${\mathcal A}(M,g)$ will be called {\it holomorphic functions} on $(M,g)$.
We also introduce the algebra
$$
{\mathcal A}(\Gamma,g)=\{w|_{\Gamma}\mid w\in{\mathcal A}(M,g)\}
$$
of {\it boundary traces of holomorphic functions}.
Define the norms on ${\mathcal A}(M,g)$ and ${\mathcal A}(\Gamma,g)$ by
$$
\begin{aligned}
\|w\|&=\sup\limits_{z\in M}|w(z)|\quad\mbox{for}\ w\in {\mathcal A}(M,g),\\
\|w\|&=\sup\limits_{z\in\Gamma}|w(z)|\quad\mbox{for}\ w\in {\mathcal A}(\Gamma,g).
\end{aligned}
$$
Then ${\mathcal A}(M,g)$ and ${\mathcal A}(\Gamma,g)$ are  commutative Banach algebras. {\it The trace operator}
$$
{\mathcal A}(M,g)\to{\mathcal A}(\Gamma,g),\quad w\mapsto w|_{\Gamma}
$$
is an isomorphism of Banach algebras preserving the norm, $\|w\|=\|w|_\Gamma\|$, by the modulus maximum principle for holomorphic functions.

Let us denote ${\mathcal A}^\infty(M,g)={\mathcal A}(M,g)\cap C^\infty(M)$. Recall also that the space ${\mathbb C}^{m-1}(\Gamma)$ of locally constant functions is defined by \eqref{2.16}.

\begin{proposition} \label{P6.1}
Let $(M,g)$ be a compact oriented flat metric surface with a non-empty boundary $\partial M=\Gamma$ consisting of $m\ge1$ boundary circles. Then

{\rm (1)} If  $a+ib\in{\mathcal A}^\infty(\Gamma,g)$
for $a,b\in C^\infty(\Gamma)$, then
\begin{equation}
\Lambda_ga\in \dot C^\infty(\Gamma),\quad
\Lambda_gb\in \dot C^\infty(\Gamma)
                                \label{6.1}
\end{equation}
and there exists $c\in{\mathbb C}^{m-1}(\Gamma)$ such that
\begin{equation}
Da-\Lambda_gD^{-1}\Lambda_ga-i\Lambda_gc=0
                                \label{6.2}
\end{equation}
and
\begin{equation}
b=-iD^{-1}\Lambda_ga+c.
                                \label{6.3}
\end{equation}

{\rm (2)} Conversely, if functions $a,b\in C^\infty(\Gamma)$ and $c\in{\mathbb C}^{m-1}(\Gamma)$ satisfy \eqref{6.1}--\eqref{6.3}, then $a+ib\in{\mathcal A}^\infty(\Gamma,g)$.

{\rm (3)} Formulas \eqref{6.1}--\eqref{6.3} imply
$
Db-\Lambda_gD^{-1}\Lambda_gb-i\Lambda_gc'=0
$
and
$
a=iD^{-1}\Lambda_gb-c'
$
with some $c'\in{\mathbb C}^{m-1}(\Gamma)$.
\end{proposition}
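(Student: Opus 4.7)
The plan is to reduce everything to the Cauchy–Riemann equations evaluated at the boundary, using the machinery already set up in (2.22)–(2.27). For a holomorphic $w = u + iv \in \mathcal{A}^\infty(M,g)$, the decomposition $\star du = \lambda_N + dv'$ from (2.22) specialises with $\lambda_N = 0$ and $v' = v$, once I normalise the additive constant in $v$ so that $b := v|_\Gamma$ has zero mean on $\Gamma$. Setting $\lambda = 0$ in (2.26)–(2.27) then produces the two scalar boundary identities
$$
\Lambda_g a = iDb, \qquad \Lambda_g b = -iDa,
$$
which are the backbone of both directions of the proposition.

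For part (1), the first identity places $\Lambda_g a \in \mathrm{Ran}\,D = \dot C^\infty(\Gamma)$ and symmetrically for $\Lambda_g b$, giving (6.1). Applying $D^{-1}$ to $Db = -i\Lambda_g a$ and invoking (2.17), I obtain $Pb = -iD^{-1}\Lambda_g a$; then $c := b - Pb$ lies in $\mathbb{C}^{m-1}(\Gamma)$ since $b \in C^\infty_0(\Gamma)$ and $Pb \in \dot C^\infty(\Gamma)$, and rearranging gives (6.3). For (6.2), I substitute $D^{-1}\Lambda_g a = i(b - c)$ from (6.3) into the middle term, use $\Lambda_g$-linearity to get $\Lambda_g D^{-1}\Lambda_g a = i\Lambda_g b - i\Lambda_g c$, and then combine with $Da = i\Lambda_g b$ (from the second boundary identity) to collapse everything to $Da - \Lambda_g D^{-1}\Lambda_g a - i\Lambda_g c = 0$.

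For part (2), the same algebra runs in reverse. Given (6.1)–(6.3), let $u, v$ be the harmonic extensions of $a, b$. Applying $D$ to (6.3) and using $DD^{-1} = \mathrm{id}$ on $\dot C^\infty(\Gamma)$ (valid thanks to (6.1)) produces $Db = -i\Lambda_g a$; substituting (6.3) into (6.2) and cancelling the $\Lambda_g c$ terms produces $Da = i\Lambda_g b$. These are exactly the tangential Cauchy–Riemann equations for $u, v$ at every point of any boundary circle, so by Lemma \ref{L5.2} the function $u + iv$ is holomorphic throughout $M$, and $a + ib \in \mathcal{A}^\infty(\Gamma,g)$. Part (3) then follows by applying part (1) to $-iw = v - iu$, whose real and imaginary parts are $b$ and $-a$ respectively; the sign in $a = iD^{-1}\Lambda_g b - c'$ is simply a renaming $c' = -(a - Pa)$.

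The main obstacle, in my view, is not the algebra but the bookkeeping around the normalization $b \in C^\infty_0(\Gamma)$ and the orientation/sign conventions used in translating the global identity $\star du = dv$ into its scalar boundary form. The constant ambiguity in the harmonic conjugate $v$ has to be absorbed so that the residual $c = b - Pb$ actually lands in the zero-mean space $\mathbb{C}^{m-1}(\Gamma)$ rather than the larger space of all locally constants; once this normalization is pinned down, everything else reduces to routine manipulation with the operators $D, D^{-1}, P$ and $\Lambda_g$.
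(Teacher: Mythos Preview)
Your proposal is correct and follows essentially the same route as the paper's proof: derive the two boundary Cauchy--Riemann identities $\Lambda_g a = iDb$ and $\Lambda_g b = -iDa$, then manipulate with $D^{-1}$, $P$, and the splitting $b = Pb + c$ to obtain (6.1)--(6.3), and for the converse reverse the algebra and invoke Lemma~\ref{L5.2}. The only cosmetic difference is that you obtain the boundary identities by specialising the Friedrichs decomposition (2.22)--(2.27) with $\lambda_N = 0$, whereas the paper re-derives them directly from the Cauchy--Riemann equations written in $(s,\nu)$-coordinates at $\Gamma$; the content is identical. One small point of phrasing: you write ``normalise the additive constant in $v$ so that $b := v|_\Gamma$ has zero mean,'' but $b$ is part of the given data, not yours to adjust. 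What you actually need (and what the paper does) is the preliminary reduction ``it suffices to assume $a,b \in C^\infty_0(\Gamma)$, since constants lie in the kernels of $\Lambda_g$ and $D$''; once that reduction is made, your identification $h = b$ in (2.26)--(2.27) is legitimate and the rest of your argument goes through verbatim.
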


\begin{proof}
The statement (3) follows from first two statements in virtue of the fact: if $a+ib\in{\mathcal A}^\infty(\Gamma,g)$ then
$b-ia\in{\mathcal A}^\infty(\Gamma,g)$.

The operators $\Lambda_g,iD,iD^{-1}$ are {\it real operators}, i.e., they transform real functions again to real functions; and they are ${\mathbb C}$-linear operators. Therefore it suffices to prove the statement (1) for real functions $a$ and $b$. Moreover, it suffices to prove the statement (1) for real functions $a,b\in C^\infty_0(\Gamma)$ since constant functions lie in kernels of $\Lambda_g$ and of $D$.

Let two real functions $a,b\in C^\infty_0(\Gamma)$ be such that $a+ib\in{\mathcal A}^\infty(\Gamma,g)$, i.e., $a+ib=(u+iv)|_\Gamma$ for real functions $u,v\in C^\infty(M)$ satisfying the Cauchy -- Riemann equations. On the curve $\Gamma$, the Cauchy -- Riemann equations can be written in the form
\begin{equation}
\frac{\partial u}{\partial\nu}\Big|_\Gamma=\frac{d}{ds}(v|_\Gamma),\quad
\frac{\partial v}{\partial\nu}\Big|_\Gamma=-\frac{d}{ds}(u|_\Gamma).
                                \label{6.4}
\end{equation}
Observe that these equations imply
\begin{equation}
\frac{\partial u}{\partial\nu}\Big|_\Gamma\in \dot C^\infty(\Gamma),\quad
\frac{\partial v}{\partial\nu}\Big|_\Gamma\in \dot C^\infty(\Gamma).
                                \label{6.5}
\end{equation}
The functions $u$ and $v$ are solutions to the Dirichlet problems
\begin{equation}
\Delta_gu=0,\quad u|_\Gamma=a,
                                \label{6.6}
\end{equation}
\begin{equation}
\Delta_gv=0,\quad v|_\Gamma=b.
                                \label{6.7}
\end{equation}
By the definition of the DN-map,
\begin{equation}
\Lambda_ga=\frac{\partial u}{\partial\nu}\Big|_\Gamma,
                                \label{6.8}
\end{equation}
\begin{equation}
\Lambda_gb=\frac{\partial v}{\partial\nu}\Big|_\Gamma.
                                \label{6.9}
\end{equation}
Formulas \eqref{6.5} and \eqref{6.9} imply the validity of \eqref{6.1}.

By \eqref{2.15}, the function $b\in C^\infty_0(\Gamma)$ can be uniquely represented in the form
\begin{equation}
b=Pb+c
                                \label{6.10}
\end{equation}
with a locally constant real function $c\in{\mathbb C}^{m-1}(\Gamma)$. Applying the operator $\Lambda_g$ to \eqref{6.10}, we have
$
\Lambda_gb=\Lambda_gPb+\Lambda_gc.
$
Together with \eqref{6.9}, this gives
\begin{equation}
\Lambda_gPb+\Lambda_gc=\frac{\partial v}{\partial\nu}\Big|_\Gamma.
                                \label{6.11}
\end{equation}
By \eqref{6.4},
$
\frac{\partial v}{\partial\nu}\Big|_\Gamma=-\frac{d}{ds}(u|_\Gamma)=-\frac{da}{ds}=-iDa.
$
Substitute this expression into \eqref{6.11} to obtain
\begin{equation}
iDa+\Lambda_gPb+\Lambda_gc=0.
                                \label{6.12}
\end{equation}
Applying the operator $D^{-1}D$ to the equality \eqref{6.10}, we have
\begin{equation}
D^{-1}Db=D^{-1}DPb.
                                \label{6.13}
\end{equation}
By \eqref{2.17}, $D^{-1}DPb=Pb$. Therefore \eqref{6.13} becomes $Pb=D^{-1}Db$. Substitute this expression into \eqref{6.12}
\begin{equation}
iDa+\Lambda_gD^{-1}Db+\Lambda_gc=0.
                                \label{6.14}
\end{equation}
The first equality in \eqref{6.4} and \eqref{6.8} imply
\begin{equation}
Db=-i\frac{db}{ds}=-i\frac{\partial u}{\partial\nu}\Big|_\Gamma=-i\Lambda_ga.
                                \label{6.15}
\end{equation}
Substituting this expression into \eqref{6.14}, we arrive to the equation \eqref{6.2}.

Recall that the operator $D^{-1}$ is defined on $\dot C^\infty(\Gamma)$. The second term on the left-hand side of \eqref{6.2} is well defined in virtue of \eqref{6.1}.

Applying the operator $D^{-1}$ to the equality \eqref{6.15}, we obtain
\begin{equation}
D^{-1}Db=-iD^{-1}\Lambda_ga.
                                \label{6.16}
\end{equation}
Again, the right-hand side of \eqref{6.16} is well defined in virtue of \eqref{6.1}.
By \eqref{2.17} and \eqref{6.10},
$$
D^{-1}Db=Pb=b-c.
$$
Substituting this expression into \eqref{6.16}, we arrive to \eqref{6.3}. The first statement of Proposition \ref{P6.1} is proved.

Again, it suffices to prove the statement (2) for real functions $a,b\in C^\infty_0(\Gamma)$ and $c\in{\mathbb C}^{m-1}(\Gamma)$. Assume formulas \eqref{6.1}--\eqref{6.3} to be valid. Let $u\in C^\infty(M)$ and $v\in C^\infty(M)$ be solutions to Dirichlet problems \eqref{6.6} and \eqref{6.7} respectively. Then equalities \eqref{6.8}--\eqref{6.9} hold. We are going to prove that $u+iv$ is a holomorphic function.

Comparing \eqref{6.3} and \eqref{6.9}, we see that
\begin{equation}
\frac{\partial v}{\partial\nu}\Big|_\Gamma=-i\Lambda_gD^{-1}\Lambda_ga+\Lambda_gc.
                                \label{6.17}
\end{equation}
Formulas \eqref{6.7}, \eqref{6.3} and \eqref{6.8} imply
$$
\frac{d}{ds}(v|_\Gamma)=\frac{db}{ds}=-i\frac{d}{ds}D^{-1}\Lambda_ga=DD^{-1}\Lambda_ga
=\Lambda_ga=\frac{\partial u}{\partial\nu}\Big|_\Gamma.
$$
Thus,
\begin{equation}
\frac{d}{ds}(v|_\Gamma)=\frac{\partial u}{\partial\nu}\Big|_\Gamma.
                                \label{6.18}
\end{equation}

Next, applying the operator $\frac{d}{ds}$ to the second equality in \eqref{6.6}, we have
$
\frac{d}{ds}(u|_\Gamma)=\frac{da}{ds}.
$
By \eqref{6.2},
$
\frac{da}{ds}=iDa=i\Lambda_gD^{-1}\Lambda_ga-\Lambda_gc.
$
Two last formulas give
$
\frac{d}{ds}(u|_\Gamma)=i\Lambda_gD^{-1}\Lambda_ga-\Lambda_gc.
$
Comparing this with \eqref{6.17}, we obtain
\begin{equation}
\frac{\partial v}{\partial\nu}\Big|_\Gamma=-\frac{d}{ds}(u|_\Gamma).
                                \label{6.19}
\end{equation}

We have thus proved that the harmonic functions $u$ and $v$ solve equations \eqref{6.18} and \eqref{6.19} that coincide with \eqref{6.4}. As we know, the latter equations mean that $u$ and $v$ satisfy Cauchy -- Riemann equations on $\Gamma$. Applying Lemma \ref{L5.2}, we see that $u+iv$ is a holomorphic function.
\end{proof}

Given the data $(\Gamma,ds,\Lambda_g)$ of the Calder\'{o}n problem, we know the spaces $\dot C^\infty(\Gamma)$ and ${\mathbb C}^{m-1}(\Gamma,g)$, the norm $\|\cdot\|_{{\mathcal A}(\Gamma)}$, and the operators $D,D^{-1},\Lambda_g$. By Proposition \ref{P6.1}, we can determine the algebra ${\mathcal A}^\infty(\Gamma,g)$. The Banach algebra ${\mathcal A}(\Gamma,g)$ is the closure of ${\mathcal A}^\infty(\Gamma,g)$ in $C(\Gamma)$. As mentioned at the beginning of the current section, Banach algebras ${\mathcal A}(\Gamma,g)$ and ${\mathcal A}(M,g)$ are isomorphic. We have thus proved

\begin{theorem} \label{Th6.2}
The Banach algebra ${\mathcal A}(M,g)$ of holomorphic functions on a compact oriented flat metric surface $(M,g)$ with non-empty boundary
$\partial M=\Gamma$ is uniquely determined, up to an isomorphism, by the data $(\Gamma,ds,\Lambda_g)$. Moreover, the Banach algebra ${\mathcal A}(\Gamma,g)$ of boundary traces of holomorphic functions is uniquely determined by the data $(\Gamma,ds,\Lambda_g)$.
\end{theorem}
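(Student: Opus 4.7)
The plan is to read the theorem as a direct corollary of Proposition \ref{P6.1}, with most of the work already done there. First I would verify that every ingredient appearing in the characterisation of $\mathcal{A}^\infty(\Gamma,g)$ given by Proposition \ref{P6.1} can be read off from the data $(\Gamma,ds,\Lambda_g)$. The arc length $ds$ fixes the smooth structure of $\Gamma$, hence $C^\infty(\Gamma)$, the list of connected components $\Gamma_j$, and their lengths $L_j$. From these one recovers, as purely intrinsic objects on the data side: the subspaces $\dot C^\infty(\Gamma)$ and $C^\infty_0(\Gamma)$ of \eqref{2.12}, the finite-dimensional space $\mathbb{C}^{m-1}(\Gamma)$ of \eqref{2.16}, the decomposition \eqref{2.15} and its projection $P$, and the operator $D=-i\,d/ds$ together with its inverse $D^{-1}$ on $\dot C^\infty(\Gamma)$. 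The operator $\Lambda_g$ is part of the data by assumption.

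Next I would use Proposition \ref{P6.1} as follows. The proposition characterises the subalgebra $\mathcal{A}^\infty(\Gamma,g)\subset C^\infty(\Gamma)$ as the set of functions $a+ib$ with $a,b\in C^\infty(\Gamma)$ for which there exists $c\in\mathbb{C}^{m-1}(\Gamma)$ such that \eqref{6.1}--\eqref{6.3} are satisfied. Since every object appearing in these equations has been identified on the data side, this description determines $\mathcal{A}^\infty(\Gamma,g)$ as a subalgebra of $C(\Gamma)$ up to canonical isomorphism. The sup-norm on $C(\Gamma)$ depends only on the underlying compact topological space $\Gamma$, so the closure of $\mathcal{A}^\infty(\Gamma,g)$ inside $C(\Gamma)$ with respect to $\|\cdot\|$ is likewise determined by the data.

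It then remains to identify this closure with the target Banach algebras of the theorem. On the boundary side one needs to know that $\mathcal{A}^\infty(\Gamma,g)$ is dense in $\mathcal{A}(\Gamma,g)$ in the sup-norm. The embedding $(M,g)\hookrightarrow(M',g')$ of Proposition \ref{P3.2} lets one shrink $M$ slightly to obtain, for each $f\in\mathcal{A}(M,g)$, a family of functions holomorphic in a full neighbourhood of the shrunken domain and converging uniformly on $M$ to $f$; restricted to $M$, these lie in $\mathcal{A}^\infty(M,g)$, and passing to boundary traces gives density in $\mathcal{A}(\Gamma,g)$. Combined with the fact, already noted just before \eqref{6.1}, that the boundary trace operator $\mathcal{A}(M,g)\to\mathcal{A}(\Gamma,g)$ is an isometric Banach algebra isomorphism by the maximum modulus principle, this yields the two claims of the theorem simultaneously.

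The step I expect to be the most delicate is the density of $\mathcal{A}^\infty(\Gamma,g)$ in $\mathcal{A}(\Gamma,g)$. On planar domains with smooth boundary it is classical (Mergelyan-type approximation, or a direct argument via the enlarged domain), and the flat metric structure on $(M,g)$ reduces our situation to the planar one through flat charts. Once this density is granted, the rest of the argument is essentially bookkeeping tied to Proposition \ref{P6.1}.
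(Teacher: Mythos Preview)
Your approach is essentially the same as the paper's: use Proposition~\ref{P6.1} to read $\mathcal{A}^\infty(\Gamma,g)$ off from $(\Gamma,ds,\Lambda_g)$, take the sup-norm closure in $C(\Gamma)$ to obtain $\mathcal{A}(\Gamma,g)$, and invoke the trace isometry to get $\mathcal{A}(M,g)$.

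One remark on the density step. Your ``shrinking'' sketch via Proposition~\ref{P3.2} is not quite right as written: restricting $f$ to a shrunken copy $M_\varepsilon\subset M$ does not produce a function on $M$, and to turn this into an approximation on $M$ you would need a biholomorphism $M\to M_\varepsilon$ close to the identity, which is not automatic on a general surface (it is the ``dilation'' trick, obvious on a disk or annulus but requiring justification here). The paper does not supply a self-contained argument either; it states the density of $\mathcal{A}^\infty(M,g)$ in $\mathcal{A}(M,g)$ as a separate proposition and defers the proof to Arens~\cite[Theorem~4.1]{Ar}, which handles exactly this approximation question for algebras of holomorphic functions on compact subsets of Riemann surfaces. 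So your instinct that this is the delicate point is correct, but the route you propose would need more work; citing Arens is the cleaner way out.
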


{\bf Remark.}
In the case of a connected $\Gamma$, i.e., when $m=1$ in \eqref{2.8}, Proposition \ref{P6.1} actually coincides with Lemma 1 of \cite{Be}. Indeed, in this case $\dot C^\infty(\Gamma)=C^\infty_0(\Gamma),\ {\mathbb C}^{m-1}(\Gamma)=0$, formulas \eqref{6.1} are valid for any functions $a,b\in C^\infty(\Gamma)$, and equations \eqref{6.2}--\eqref{6.3} look as follows:
$
b=-iD^{-1}\Lambda_ga,\ Da-\Lambda_gD^{-1}\Lambda_ga=0.
$
Besides this, $a=D^{-1}Da$ and the last equation can be written in the form
$
\Big(1-(\Lambda_gD^{-1})^2\Big)Da=0,
$
where 1 is the identity operator. This coincides with the formula (1.5) of \cite{Be}.

\section{The Gelfand transform}

We first reproduce some basic facts of Banach algebra theory related to the Gelfand transform, mostly following \cite[Chaper V, Section 1]{G}.

Let $A$ be a complex commutative Banach algebra with unit. By ${\mathfrak M}_A$ we denote the set of all maximal ideals $m\subset A$ such that $m\neq A$. Every maximal ideal $m\in{\mathfrak M}_A$ is identified with a complex homomorphism $m:A\to{\mathbb C}$ satisfying $m(1)=1$ \cite[Chaper V, Theorem 1.2]{G}. {\it The Gelfand topology} on ${\mathfrak M}_A$ is defined as follows. Given $m_0\in{\mathfrak M}_A$, choose $\varepsilon>0$ and a finite sequence $f_1,\dots,f_n\in A$. Sets of the form
\begin{equation}
\{m\in{\mathfrak M}_A\mid|m(f_i)-m_0(f_i)|<\varepsilon\quad(1\le i\le n)\}
                                \label{7.1}
\end{equation}
constitute a basis of the topology, i.e., open sets in ${\mathfrak M}_A$ are unions of sets of type \eqref{7.1}. Furnished with the Gelfand topology, ${\mathfrak M}_A$ is a compact Hausdorff topological space.

The simplest and most important example is the Banach algebra $C(X)$ of continuous complex-values functions on a compact Hausdorff topological space $X$ with the supremum norm
$\|f\|=\sup_{x\in X}|f(x)|$.
Every point $x\in X$ determines the homomorphism $m_x\in{\mathfrak M}_{C(X)}$ by $m_x(f)=f(x)$ for $f\in C(X)$. Conversely, for every $m\in{\mathfrak M}_{C(X)}$, there exists a point $x\in X$ such that $m=m_x$ \cite[Chapter I, Section 3]{Gm}.

For an arbitrary Banach algebra $A$, the Banach algebra homomorphism
$$
A\to C({\mathfrak M}_A),\ f\mapsto\widehat f
$$
defined by
$
\widehat f(m)=m(f)\ (f\in{\mathfrak M}_A)
$
is called {\it the Gelfand transform}.

The Gelfand topology on ${\mathfrak M}_A$ coincides with the $\ast$-weak topology, i.e., with the weakest topology such that all functions $\widehat f:{\mathfrak M}_A\to{\mathbb C}\ (f\in A)$ are continuous. Moreover, the Gelfand transform does not increase the norm, i.e.,
\begin{equation}
\|\widehat f\|=\sup_{m\in{\mathfrak M}_A}|\widehat f(m)|\le\|f\|\quad (f\in A,m\in{\mathfrak M}_A).
                                \label{7.2}
\end{equation}
A Banach algebra $A$ is said to be {\it a uniform algebra} if there is the equality in \eqref{7.2} for any $f\in A$. By
\cite[Chaper V, Theorem 1.2]{G}, $A$ is a uniform algebra if and only if $\|f^2\|=\|f\|^2$ for all $f\in A$.

Let now $X$ be a compact Hausdorf topological space and let $A$ be a closed subalgebra of $C(X)$ containing constant functions and separating points; such Banach algebras are called {\it uniform algebras on $X$} \cite[Chapter V, Section 1, Example 1]{G}. For such an algebra, the continuous injective map
\begin{equation}
\varepsilon:X\to{\mathfrak M}_A
                                \label{7.3}
\end{equation}
is defined by $\varepsilon(x)=\delta_x$, where $\delta_x$ is the Dirac function supported at the point $x\in X$, i.e., $\delta_x(f)=f(x)$ for $f\in A$. Since $X$ is compact and ${\mathfrak M}_A$ is Hausdorff, $\varepsilon$ is a homeomorphism of $X$ onto some closed subset of ${\mathfrak M}_A$. Unfortunately, there is no common notation for the latter closed subset, let us denote it by $[X]_{{\mathfrak M}_A}$.
If \eqref{7.3} is a homeomorphism of $X$ onto ${\mathfrak M}_A$, i.e., if $[X]_{{\mathfrak M}_A}={\mathfrak M}_A$, we say that {\it $X$ coincides with ${\mathfrak M}_A$} and write $X={\mathfrak M}_A$. At least Garnet \cite{G} uses this terminology. Belishev \cite{Be} says that
{\it $A$ is a generic algebra} if $[X]_{{\mathfrak M}_A}={\mathfrak M}_A$, but I do not like this term. If $X$ coincides with
${\mathfrak M}_A$, then
$$
f=\widehat f\circ\varepsilon^{-1}\quad\mbox{for every}\ f\in A.
$$

Recall that the Banach algebra ${\mathcal A}(M,g)$ was defined in the previous section for a compact oriented flat metric surface $(M,g)$ with a non-empty boundary.

\begin{theorem} \label{Th7.1}
Let $(M,g)$ be a compact flat oriented metric surface with a non-empty boundary. Then $M$ coincides with ${\mathfrak M}_{{\mathcal A}(M,g)}$, i.e., \eqref{7.3} is a homeomorphism of $M$ onto ${\mathfrak M}_A$ for $A={\mathcal A}(M,g)$.
\end{theorem}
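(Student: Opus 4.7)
My plan is to establish Theorem \ref{Th7.1} in three phases: injectivity of $\varepsilon$ via point separation, topological embedding by compactness, and surjectivity by showing every $\phi\in\mathfrak{M}_{\mathcal{A}(M,g)}$ is a point evaluation. The first two phases are routine; all of the real content is in the third.

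For point separation, I would use Proposition \ref{P3.2} and Proposition \ref{P5.6}(3) to embed $(M,g)$ isometrically into a non-compact flat Riemann surface $(M',\mathcal{C}')$. By the Behnke--Stein theorem every non-compact Riemann surface is Stein, so $\mathcal{O}(M')$ separates points of $M'$, and in particular of $M\subset M'$. Restricting any such separating holomorphic function to $M$ yields an element of $\mathcal{A}(M,g)$ that separates the given pair of points, so $\varepsilon$ is injective. Continuity of $\varepsilon$ is automatic since $\widehat f\circ\varepsilon=f$ is continuous on $M$ for every $f\in\mathcal{A}(M,g)$, and the Gelfand topology is the weakest one making all $\widehat f$ continuous. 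Since $M$ is compact and $\mathfrak{M}_{\mathcal{A}(M,g)}$ is Hausdorff, $\varepsilon$ is already a homeomorphism onto its image, and only surjectivity remains.

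For surjectivity fix $\phi\in\mathfrak{M}_{\mathcal{A}(M,g)}$ and set $X_f=\{x\in M\mid f(x)=\phi(f)\}$ for each $f\in\mathcal{A}(M,g)$; I need to show $\bigcap_{f}X_f\neq\emptyset$. A first spectrum calculation gives $\sigma_{\mathcal{A}(M,g)}(f)=f(M)$: if $\lambda\notin f(M)$, the continuous function $(\lambda-f)^{-1}$ on $M$ is automatically holomorphic on $\stackrel\circ M$, hence lies in $\mathcal{A}(M,g)$ and inverts $\lambda-f$ there. Consequently $\phi(f)\in f(M)$, every $X_f$ is a non-empty closed subset of $M$, and compactness of $M$ reduces the surjectivity to the finite intersection property: for each finite family $f_1,\ldots,f_n\in\mathcal{A}(M,g)$ there must exist $x\in M$ with $f_i(x)=\phi(f_i)$ for every $i$. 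Arguing by contradiction, suppose the functions $h_i:=f_i-\phi(f_i)$ have no common zero on $M$; then it is enough to produce $g_1,\ldots,g_n\in\mathcal{A}(M,g)$ with $\sum_{i=1}^n g_ih_i=1$, since applying $\phi$ would yield $0=\phi(1)=1$.

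The production of such a partition-of-unity identity is the main obstacle, and this is where I would rely on Stein theory for open Riemann surfaces. The plan is first to approximate each $h_i$ uniformly on $M$ by a function $\tilde h_i$ holomorphic on an open neighborhood $U$ of $M$ in $M'$ (a Mergelyan-type approximation theorem for bordered Riemann surfaces inside Stein manifolds); since the $h_i$ have no common zero on the compact set $M$, after shrinking $U$ to a Stein neighborhood $V$ of $M$ the $\tilde h_i$ still have no common zero on $V$. Applying Cartan's Theorem B to the sheaf of ideals generated by $\tilde h_1,\ldots,\tilde h_n$ on the Stein manifold $V$ yields $\tilde g_i\in\mathcal{O}(V)$ with $\sum_{i=1}^n\tilde g_i\tilde h_i=1$ on $V$; restricting to $M$ and absorbing the small approximation error via a Neumann-series argument inside the Banach algebra $\mathcal{A}(M,g)$ produces the desired $g_i$. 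The delicate point is uniform control of $\|\tilde g_i\|_M$ as the approximation is refined, so that the error term stays invertible-perturbable; once this is in place, the homeomorphism statement is immediate, being a continuous bijection from a compact space to a Hausdorff space.
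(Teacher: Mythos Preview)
The paper's proof is a one-liner: embed $(M,g)$ into a flat Riemann surface $(M',\mathcal{C}')$ via Proposition~\ref{P5.6}(3), observe that $\mathcal{A}(M,g)$ coincides with Arens' algebra $CH(M,G)$ for $G=\stackrel\circ M$, and invoke Arens' Theorem~5.3 (quoted as Theorem~\ref{P7.2}), which asserts precisely that every complex homomorphism of $CH(M,G)$ is a point evaluation. Your route is a direct attack via Stein theory (Behnke--Stein, a Riemann-surface Mergelyan theorem, Cartan~B); this is legitimate and more self-contained in spirit, though it trades Arens' elementary function-algebra argument for heavier sheaf-theoretic machinery.

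Your surjectivity argument, however, has a real gap at exactly the point you flag. After obtaining $\sum\tilde g_i\tilde h_i=1$ on a Stein neighborhood, you set $e=\sum(\tilde g_i|_M)(h_i-\tilde h_i)$ and hope to invert $1+e$; but $\|e\|\le(\sum_i\|\tilde g_i\|_M)\cdot\max_i\|h_i-\tilde h_i\|_M$, and the first factor depends on the approximation you have \emph{already} fixed. There is no a~priori bound on the Bezout solutions $\tilde g_i$ as the approximation improves---this is a corona-type difficulty that does not come for free from Cartan~B. The clean fix is to stop trying to return to the original $h_i$: after approximating, replace each $\tilde h_i$ by $\tilde h_i-\phi(\tilde h_i|_M)$. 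This is still holomorphic on the neighborhood, its restriction to $M$ lies in $\ker\phi$, and it is within $2\varepsilon$ of $h_i$ since $|\phi(\tilde h_i|_M)|=|\phi(\tilde h_i|_M-h_i)|\le\varepsilon$. For small $\varepsilon$ these adjusted functions still have no common zero on $M$, hence on a Stein neighborhood, and the Bezout identity there yields $1=\phi(1)=\sum\phi(\tilde g_i|_M)\cdot 0=0$ directly---no Neumann series, no circularity.
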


We are going to demonstrate that Theorem \ref{Th7.1} follows from the main result of the paper \cite{Ar} by Arens. We first cite Theorem 5.3 of \cite{Ar} in slightly different notations.

Let $M'$ be a Riemann surface. Let $G$ be be an open subset of $M'$, and let $M$ be a compact subset of $M'$ containing $G$. By $CH(M,G)$ we denote the algebra of complex functions defined and continuous on $M$ and moreover holomorphic on $G$. Being furnished with the norm $\|f\|=\sup_{z\in M}|f(z)|$, $CH(M,G)$ is a commutative Banach algebra.

\begin{theorem}[see {\cite[Theorem 5.3]{Ar}}] \label{P7.2}
Every homomorphism $m:CH(M,G)\to{\mathbb C}$ is of the form $m(f)=f(z)$ for some point $z\in M$.
\end{theorem}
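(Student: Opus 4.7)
The plan is to prove the theorem by the standard Gelfand-theoretic route: show that every multiplicative linear functional $m\colon A\to\mathbb{C}$ on the uniform algebra $A=CH(M,G)$ is a point evaluation at some $z\in M$. First, any unital homomorphism of a commutative Banach algebra is automatically continuous with $|m(f)|\le\|f\|_M$, where $\|f\|_M=\sup_{z\in M}|f(z)|$. The single-function spectral containment $m(f)\in f(M)$ follows quickly: if $m(f)\notin f(M)$, then $f-m(f)$ is nowhere zero on the compact set $M$, hence $(f-m(f))^{-1}$ is continuous on $M$; since $f-m(f)$ is non-vanishing and holomorphic on $G$, its reciprocal is also holomorphic on $G$, so $(f-m(f))^{-1}\in A$. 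This contradicts $m(f-m(f))=0$.

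The heart of the argument is to upgrade this one-function containment to a simultaneous one: for every finite family $f_1,\dots,f_n\in A$, the closed nonempty sets $Z_i=\{z\in M : f_i(z)=m(f_i)\}$ have a common point. Once this finite intersection property is secured, compactness of $M$ produces a point $z_0\in\bigcap_{f\in A}Z_f$, which is the desired point evaluation. The finite intersection property is equivalent to the following Bezout-type assertion $(\ast)$: whenever $g_1,\dots,g_n\in A$ have no common zero in $M$, there exist $h_1,\dots,h_n\in A$ with $\sum_i g_i h_i=1$. Indeed, applying $(\ast)$ to $g_i=f_i-m(f_i)$ and then the homomorphism $m$ would yield $0=m(1)=1$, a contradiction.

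To prove $(\ast)$, I would work in two phases. \emph{Phase 1 (local reciprocals):} Since the $g_i$ have no common zero on the compact set $M$, cover $M$ by finitely many open sets $U_\alpha$ on each of which some $g_{i(\alpha)}$ is bounded away from zero; then $1/g_{i(\alpha)}$ is continuous on $U_\alpha$ and holomorphic on $U_\alpha\cap G$. \emph{Phase 2 (gluing):} Assemble the local inverses into global $h_i\in A$ by a \v{C}ech-type patching. The one-dimensional complex structure makes the holomorphic side tractable: every open Riemann surface is Stein by Behnke--Stein, so Cartan's Theorem B yields $H^1(G,\mathcal{O})=0$, which resolves the holomorphic cocycles over $G$; the continuity of the glued functions across points of $M\setminus G$ is controlled by solving an associated $\bar\partial$-equation with continuous right-hand side via the inhomogeneous Cauchy--Pompeiu formula.

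\textbf{Main obstacle.} Phase 2 is where the difficulty lies: the holomorphic corrections on $G$ produced by Cartan's Theorem B must be arranged to extend continuously across $M\setminus G$, compatibly with the continuous reciprocals constructed there. This is precisely the matching between interior holomorphic data and boundary continuous data that occupies most of Arens's original paper, and in one complex variable the correct technical input is that the Cauchy--Pompeiu solution operator for $\bar\partial$ preserves continuity up to the boundary for continuous data. A reduction to the case where $M$ lies in a single coordinate chart of $M'$, via patching over a finite cover and using the Riemann surface structure only through local uniformizers, substantially simplifies the bookkeeping in the gluing step.
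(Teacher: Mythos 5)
The paper does not actually prove this statement: it is quoted verbatim (up to notation) from Arens \cite[Theorem 5.3]{Ar}, so there is no internal proof to compare against. Judged on its own terms, your proposal gets the standard reductions right: automatic continuity of $m$, the single-function containment $m(f)\in f(M)$ via invertibility of $f-m(f)$ in $CH(M,G)$, the passage from the finite intersection property of the level sets $Z_i$ to a point evaluation by compactness of $M$, and the equivalence of that finite intersection property with the Bezout-type assertion $(\ast)$. All of this is correct and is indeed the standard Gelfand-theoretic skeleton one would use.

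The genuine gap is that $(\ast)$ is essentially equivalent to the theorem itself (a maximal ideal with no common zero among finitely many of its generators is exactly a counterexample to $(\ast)$), and your Phase 2 does not prove it --- you yourself flag it as the ``main obstacle.'' Cartan's Theorem B applied to the open Riemann surface $G$ resolves the additive cocycle $1/g_{i(\alpha)}-1/g_{i(\beta)}$ only by holomorphic functions on $G$ with no control whatsoever near $M\setminus G$, so the corrected local inverses need not be continuous on $M$, i.e.\ need not lie in $A=CH(M,G)$; and the appeal to a Cauchy--Pompeiu $\bar\partial$-solution ``with continuous right-hand side'' presupposes that the cocycle data has already been extended continuously past the boundary and organized into a global $(0,1)$-form on a neighborhood of $M$, which is precisely the construction that has to be carried out. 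This interior-versus-boundary matching is the entire content of Arens's paper (he handles it by a careful localization and approximation argument, not by Theorem B plus an integral operator), so as written the proposal reduces the theorem to an unproved statement of comparable depth. Given that the paper simply cites \cite{Ar}, the honest options are either to do the same or to actually execute the gluing step in detail; the current sketch does neither.
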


\begin{proof}[Proof of Theorem \ref{Th7.1}]
Let $(M,g)$ be a compact flat oriented metric surface with a non-empty boundary. By Proposition \ref{P5.6}, $(M,g)$ can be isometrically embedded into a flat Riemann surface $(M',{\mathcal C}')$ furnished with the metric $g_{{\mathcal C}'}$. Set $G=M\setminus\partial M$. Then $M$ is a compact subset of $M'$, $G$ is an open subset of $M'$, and $G\subset M$. Moreover, the Banach algebra $CH(M,G)$ coincides with
${\mathcal A}(M,g)$. Applying Theorem \ref{P7.2}, we obtain the statement of Theorem \ref{Th7.1}.
\end{proof}

We complete the section with two useful statements that are proved by similar arguments although do not relate to the Gelfand transform.

\begin{proposition} \label{P7.3}
Let $(M,g)$ be a compact oriented flat metric surface with a non-empty boundary. The algebra ${\mathcal A}^\infty(M,g)$ separates points, i.e., for any two different points $x,y\in M$ there exists a function $f\in{\mathcal A}^\infty(M,g)$ such that $f(x)\neq f(y)$.
\end{proposition}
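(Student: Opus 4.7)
The plan is to reduce the statement to the fact that global holomorphic functions on an open Riemann surface separate points, which is a consequence of the classical Behnke--Stein theorem.

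First, I would invoke Proposition \ref{P5.6}(3) (and Proposition \ref{P3.2} for non-compactness) to isometrically embed $(M,g)$ into a non-compact flat Riemann surface $(M',\mathcal{C}')$ with no boundary, considered with its canonical metric $g'=g_{\mathcal{C}'}$. Thus $M$ sits as a compact subset of an open (i.e.\ non-compact) Riemann surface $M'$, and holomorphy on $\stackrel\circ M$ in the sense of $(M,g)$ agrees with holomorphy in the ambient complex structure $\mathcal{C}'$ restricted to $\stackrel\circ M\subset M'$.

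Next, by the Behnke--Stein theorem, every non-compact Riemann surface is Stein. In particular, global holomorphic functions on $M'$ separate points: given two distinct $x,y\in M\subset M'$, there exists $F\in\mathcal{O}(M')$ with $F(x)\neq F(y)$. Setting $f=F|_M$, the function $f$ is $C^\infty$-smooth on $M$ (since $F$ is smooth on the open manifold $M'\supset M$), continuous on $M$, and holomorphic on $\stackrel\circ M$. Therefore $f\in\mathcal{A}^\infty(M,g)$ and $f(x)\neq f(y)$, which is exactly the claim.

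The main obstacle is the invocation of Behnke--Stein, which is the only substantive analytic input; the remainder is a routine restriction argument. One could try an elementary approach using the flat atlas of $M'$ directly, letting the flat coordinates $z_j$ play the role of separating functions locally, but the transition maps $z_k=a_{jk}z_j+b_{jk}$ with $|a_{jk}|=1$ do not patch into a single global holomorphic function on $M'$, so some cohomological vanishing (equivalently, the Stein property) seems unavoidable. A minor subtlety is that Proposition \ref{P3.2} yields $M'$ without boundary by gluing an $\varepsilon$-collar, and this $M'$ is manifestly non-compact whenever $\partial M\neq\emptyset$, so the hypothesis of Behnke--Stein is automatic in our setting.
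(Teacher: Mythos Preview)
Your argument is correct, but it proceeds along a different path from the paper. The paper deduces Proposition~\ref{P7.3} as an immediate consequence of the stronger Lemma~\ref{L7.4}: for every $z_0\in M$ there exists $f\in\mathcal{A}^\infty(M,g)$ with a single simple zero at $z_0$. That lemma is proved by \emph{compactifying} $M$---gluing a disk to each boundary circle to obtain a closed Riemann surface $M'$---and then invoking the Riemann--Roch theorem to produce a meromorphic function on $M'$ with a simple zero at $z_0$ and a simple pole inside one of the glued disks; its restriction to $M$ is the desired function. You instead extend $M$ outward to an \emph{open} Riemann surface and appeal to Behnke--Stein. Both arguments rely on a substantial classical input (Riemann--Roch versus Behnke--Stein), so neither is noticeably more elementary. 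The paper's route has the advantage that Lemma~\ref{L7.4} is independently needed later (in the proof of Lemma~\ref{L8.1}, where one requires a function with a simple zero at a prescribed point to serve as a local coordinate); your Behnke--Stein approach, as written, yields point separation but not directly this stronger statement, though of course the Stein property would give it with a little extra work.
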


Proposition \ref{P7.3} obviously follows from the stronger statement that will be used later.

\begin{lemma} \label{L7.4}
Let $(M,g)$ be a compact oriented flat metric surface with a non-empty boundary $\Gamma=\partial M$.
For every point $z_0\in M$, there exists a function $f\in{\mathcal A}^\infty(M,g)$ such that $f(z)\neq0$ for all points $M\ni z\neq z_0$ and $z_0$ is a simple zero of $f$. i.e., $f(z_0)=0$ and $df(z_0)\neq0$.
\end{lemma}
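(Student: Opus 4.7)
}
The idea is to pass from $M$ to an ambient open Riemann surface where the full machinery of non-compact Riemann-surface theory is available, construct the function there, and then restrict. By Proposition \ref{P5.6}(3), one may isometrically embed $(M,g)$ into a flat Riemann surface $(M',\mathcal{C}')$ furnished with the metric $g_{\mathcal{C}'}$; since $M'$ has no boundary, every point of $M$ (including boundary points) is an interior point of $M'$, so a full open neighborhood of $M$ inside $M'$ is available and any function holomorphic on that neighborhood automatically restricts to an element of ${\mathcal A}^\infty(M,g)$.

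Next I would invoke the Behnke--Stein theorem: every non-compact Riemann surface is Stein. Combined with the vanishing $H^{2}(M',\mathbb{Z})=0$ (an open Riemann surface has the homotopy type of a one-dimensional CW complex), this implies that the Cousin II problem is solvable on $M'$, equivalently, the Weierstrass theorem holds on $M'$: for every divisor on $M'$ one can realize that divisor by a meromorphic function. Applying this to the divisor consisting of a single simple point $z_{0}$ produces a holomorphic function $f\colon M'\to\mathbb{C}$ whose zero set is exactly $\{z_{0}\}$ and whose zero there is of order one. The restriction $f|_{M}$ is then holomorphic on $\stackrel{\circ}{M}$, smooth on $M$ (since $f$ is holomorphic on a neighborhood of $M$ in $M'$), satisfies $f(z)\neq 0$ for every $z\in M\setminus\{z_{0}\}$, and has $df(z_{0})\neq 0$; hence $f|_{M}\in{\mathcal A}^{\infty}(M,g)$ is the desired function.

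The main obstacle is justifying the existence of such an $f$ without appealing to heavy Stein theory. An alternative, more hands-on, route is to construct $f$ from the Green function. After slightly enlarging $M$ to a relatively compact open $\Omega$ with $M\subset\Omega\Subset M'$ and smooth boundary, let $G(\cdot,z_{0})$ be the Green function of $\Omega$ with pole at $z_{0}$; near $z_{0}$ one has $G(z,z_{0})=-\log|z-z(z_{0})|+h(z)$ with $h$ harmonic. Locally $-(G+i\widetilde{G})$ is holomorphic, and $\exp\bigl(-(G+i\widetilde{G})\bigr)$ extends holomorphically across $z_{0}$ with a simple zero there. The harmonic conjugate $\widetilde{G}$ however is in general multivalued, with periods along the generators of $H_{1}(\Omega,\mathbb{Z})$, so the exponential is only single-valued modulo those periods. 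One then multiplies by a suitable product of ``period-correcting'' single-valued holomorphic factors (essentially the Ahlfors-type construction) to kill the periods; this is possible because on an open Riemann surface there are enough holomorphic 1-forms. I would expect this direct construction to be considerably longer than the Stein-theoretic route, which is why the Behnke--Stein argument is preferable, and indeed the cleanest version of the proof only needs the single citation to Behnke--Stein plus the Weierstrass theorem on open Riemann surfaces.
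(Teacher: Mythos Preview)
Your argument is correct but takes a genuinely different route from the paper. The paper does \emph{not} use the non-compact collar extension of Proposition~\ref{P5.6}(3); instead it caps off each boundary circle $\Gamma_j$ with a disk $\mathbb{D}_j$ to obtain a \emph{compact} Riemann surface $M'$, extends the metric (hence the complex structure) across the disks, and then invokes the Riemann--Roch theorem to produce a meromorphic function $h$ on $M'$ with a simple zero at $z_0$ and a pole located inside one of the glued disks; the restriction $h|_M$ is the desired element of $\mathcal{A}^\infty(M,g)$. Your approach instead stays on the open surface and appeals to Behnke--Stein together with the Weierstrass theorem for non-compact Riemann surfaces to realize the divisor $1\cdot z_0$ by a global holomorphic function. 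Both routes are valid; yours has the advantage that the resulting $f$ is holomorphic on all of $M'$ with no poles to track, and it sidesteps a delicate point in the paper's formulation (for genus $\geq 1$ one cannot literally prescribe a single simple zero and a single simple pole at arbitrary points, so the pole divisor in the glued disks must in general be allowed higher degree). The paper's route, on the other hand, relies only on the very classical Riemann--Roch theorem rather than on the somewhat deeper Behnke--Stein machinery. Your alternative Green-function construction is also viable but, as you note, considerably longer.
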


\begin{proof}
Let $m$ be the number of boundary circles, see \eqref{2.8}. Gluing a disk ${\D}_j$ to every boundary circle $\Gamma_j$, we obtain a compact oriented two-dimensional manifold $M'$ with no boundary such that
$
M\setminus\Gamma=M'\setminus\bigcup_{j=1}^m{\D}_j.
$
The Riemannian metric $g$ can be extended to a Riemannian metric $g'$ on $M'$. The latter metric determines a
complex structure on $M'$ so that $M'$ becomes a compact Riemann surface.

By the Riemann -- Roch theorem \cite[Theorem 16.9]{F}, there exists a meromorphic function $h$ on $M'$ with one simple zero at $z_0$ and one simple pole at $z_1\in{\D}_1$. The restriction $f=h|_M$ possesses the desired properties.
\end{proof}

We will also need the following statement

\begin{proposition}
Let $(M,g)$ be a compact oriented flat metric surface with a non-empty boundary. The algebra ${\mathcal A}^\infty(M,g)$ is dense in ${\mathcal A}(M,g)$ with respect to the supremum norm.
\end{proposition}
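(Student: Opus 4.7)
The plan is to embed $(M,g)$ into an ambient Riemann surface without boundary and then approximate any $f\in{\mathcal A}(M,g)$ by functions holomorphic on an open neighborhood of $M$ in that ambient surface; such approximants are real-analytic, hence $C^\infty$, on $M$, and therefore lie in ${\mathcal A}^\infty(M,g)$.

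First I would use Proposition \ref{P3.2} to extend $(M,g)$ to a flat metric surface $(M',g')$ without boundary, and then use Proposition \ref{P5.6}(2) to equip $M'$ with its canonical flat complex structure, making $M'$ a (not-necessarily-compact) Riemann surface in which $M$ sits as a compact subset with smooth boundary $\Gamma$. For later use, I would arrange the extension so that no connected component of $M'\setminus M$ is relatively compact in $M'$. This can be done either by attaching an infinite collar to each boundary circle, or, as in the proof of Lemma \ref{L7.4}, by gluing a disk to each $\Gamma_j$ to form a compact Riemann surface $M''$ and then removing one point from the interior of each glued disk; in either case, each component of $M'\setminus M$ stretches out to infinity or to a puncture.

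Next I would apply a Mergelyan-type approximation theorem on the Riemann surface $M'$. Under the non-compactness condition just arranged, the algebra ${\mathcal A}(M,g)$ of functions continuous on $M$ and holomorphic on $\stackrel\circ M$ coincides with the uniform closure $R(M)$ of the algebra of functions that are holomorphic on some open neighborhood of $M$ in $M'$ (the neighborhood may depend on the function). Since every element of $R(M)$ is real-analytic on $M$, in particular $C^\infty$ up to $\Gamma$, it belongs to ${\mathcal A}^\infty(M,g)$. Density of ${\mathcal A}^\infty(M,g)$ in ${\mathcal A}(M,g)$ follows.

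The main obstacle is precisely the Mergelyan-type statement on $M'$: the paper develops no such result, and its proof is substantially less elementary than the material in preceding sections. The cleanest option would be to cite a reference (for instance Scheinberg's theorem, which gives exactly $A(K)=R(K)$ for compact $K$ in an open Riemann surface whose complement has no relatively compact components). If a self-contained argument is preferred, one would proceed by localization: use the flat atlas to cover a neighborhood of $\Gamma$ in $M'$ by flat charts, apply the planar Mergelyan theorem in each chart to produce local holomorphic approximants of $f$, glue them to the interior piece via a smooth partition of unity to obtain a smooth approximant $\tilde f$ on $M'$ whose $\bar\partial \tilde f$ is small and compactly supported in $M'\setminus M$, and then solve $\bar\partial u=\bar\partial\tilde f$ on $M'$ with $u$ uniformly small on $M$. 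The solvability of this $\bar\partial$-problem on the non-compact Riemann surface $M'$, with the required uniform estimate, is the critical analytic input; the non-compactness of each component of $M'\setminus M$ is what makes it available.
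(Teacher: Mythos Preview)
Your approach is exactly the paper's: the paper omits the proof entirely, remarking only that it ``can be easily obtained on the base of \cite[Theorem~4.1]{Ar}'', which is precisely a Mergelyan-type approximation theorem on Riemann surfaces of the kind you invoke (you cite Scheinberg instead of Arens, but the content is the same---functions holomorphic on a neighborhood of $M$ are uniformly dense in ${\mathcal A}(M,g)$ once the complement of $M$ in the ambient surface has no relatively compact components).

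One wording slip to fix: you define $R(M)$ as the uniform \emph{closure} of functions holomorphic near $M$, then write ``every element of $R(M)$ is real-analytic on $M$''. That is false as stated (elements of the closure need not be real-analytic) and, taken literally, would give ${\mathcal A}(M,g)\subset{\mathcal A}^\infty(M,g)$, which is absurd. What you mean---and what the argument needs---is that the \emph{approximants}, i.e.\ the functions holomorphic on a neighborhood of $M$, are real-analytic and hence lie in ${\mathcal A}^\infty(M,g)$; their density in ${\mathcal A}(M,g)$ then gives the result.
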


We omit the proof that can be easily obtained on the base of \cite[Theorem 4.1]{Ar}.

\section{Proof of Theorem \ref{Th1.1}}

Recall that we are going to prove Theorem \ref{Th1.1} under the additional assumption: both metric surfaces $(M_1,g_1)$ and $(M_2,g_2)$ are oriented and the isometry $\varphi:\partial M_1\to\partial M_2$ preserves orientations. Under the assumption, Theorem \ref{Th1.1} is equivalent to Theorem \ref{Th3.3}. Thus, we are proving  Theorem \ref{Th3.3}.

Let hypotheses of Theorem \ref{Th3.3} be satisfied.
By Theorem \ref{Th6.2}, the equality $\Lambda_{g_1}=\Lambda_{g_2}$ implies
$
{\mathcal A}(\Gamma,g_1)={\mathcal A}(\Gamma,g_2).
$
Recall that, for $j=1,2$, the trace operator
$$
{\mbox{tr}}_j:{\mathcal A}(M,g_j)\to{\mathcal A}(\Gamma,g_j)
$$
is an isomorphism of Banach algebras preserving norms. Therefore
\begin{equation}
\varphi=\mbox{tr}_1^{-1}\circ\mbox{tr}_2:{\mathcal A}(M,g_2)\to {\mathcal A}(M,g_1)
                                \label{8.1}
\end{equation}
is also an isomorphism of Banach algebras preserving norms. It also preserves constants, i.e., $\varphi c=c$ for
$\mbox{const}=c\in{\mathcal A}(M,g_2)$.

The isomorphism \eqref{8.1} implies the homeomorphism between the spaces of maximal ideals
\begin{equation}
\widetilde\varphi:{\mathfrak M}_{{\mathcal A}(M,g_2)}\to{\mathfrak M}_{{\mathcal A}(M,g_1)},
                                \label{8.2}
\end{equation}
where $\widetilde\varphi(m)=\{\varphi f\mid f\in m\}$ for $m\in{\mathfrak M}_{{\mathcal A}(M,g_2)}$.

Recall that a maximal ideal $m\in{\mathfrak M}_A$ of a Banach algebra $A$ is identified with the homomorphism $m:A\to{\mathbb C}$ of algebras satisfying $m(1)=1$.

Now, looking at \eqref{8.2}, we define the map $\psi:M\to M$ as follows. Given a point $z_1\in M$, let the ideal
$m_1\in{\mathfrak M}_{{\mathcal A}(M,g_1)}$ be defined by $m_1(f)=f(z_1)$ for $f\in{\mathcal A}(M,g_1)$. We set
$m_2=\widetilde\varphi(m_1)\in{\mathfrak M}_{{\mathcal A}(M,g_2)}$. By Theorem \ref{Th7.1}, there exists a unique point $z_2\in M$ such that $m_2(f)=f(z_2)$ for $f\in{\mathcal A}(M,g_2)$. We set $\psi(z_1)=z_2$.

It is clear that $\psi$ is a homeomorphism of $M$ onto itself and that $\psi|_{\partial M}=\mbox{\rm Id}$.

The homeomorphism $\psi$ can be also described as follows. For a point $z\in M$ let $m_{j,z}\in{\mathfrak M}_{{\mathcal A}(M,g_j)}\ (j=1,2)$ be the maximal ideal of functions vanishing at $z$. Then
\begin{equation}
m_{1,z}\circ\varphi=m_{2,\psi(z)}.
                                \label{8.3}
\end{equation}
This equality immediately follows from the definition of $\psi$ and characterizes the homeomorphism $\psi$.

By Proposition \ref{P5.6}, the flat metric $g_j\ (j=1,2)$ determines the flat complex structure ${\mathcal C}_j={\mathcal C}_{g_j}$ on
$\stackrel\circ M=M\setminus\Gamma$. Since $\psi$ is a homeomorphism of $M$ onto itself, the restriction $\psi|_{\stackrel\circ M}$ is also
a homeomorphism of $\stackrel\circ M$ onto itself.

\begin{lemma} \label{L8.1}
Under hypotheses of Theorem \ref{Th3.3}, let the homeomorphism $\psi:M\to M$ be defined by \eqref{8.3}. Then
$\psi|_{\stackrel\circ M}:(\stackrel\circ M,{\mathcal C}_1)\to(\stackrel\circ M,{\mathcal C}_2)$ is a biholomorphism of Riemann surfaces.
\end{lemma}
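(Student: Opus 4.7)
The plan is to unpack the defining relation \eqref{8.3} for $\psi$ into a pullback formula $\varphi f=f\circ\psi$, and then to use Lemma \ref{L7.4} to manufacture, at any prescribed interior point of $(M,g_2)$, a local holomorphic coordinate that is the restriction of a global element of $\mathcal{A}^\infty(M,g_2)$. Holomorphicity of $\psi|_{\stackrel\circ M}$ in local charts is then immediate from the fact that $\varphi$ sends $\mathcal{A}(M,g_2)$ to $\mathcal{A}(M,g_1)$.

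First I would derive the identity $\varphi f=f\circ\psi$ for every $f\in\mathcal{A}(M,g_2)$. Under the identification of a maximal ideal $m_{j,z}$ with the evaluation homomorphism $g\mapsto g(z)$, the relation \eqref{8.3} reads $(\varphi f)(z)=m_{1,z}(\varphi f)=m_{2,\psi(z)}(f)=f(\psi(z))$. Next, fix an interior point $z_0\in\stackrel\circ M$; since $\psi$ is a homeomorphism of $M$ fixing $\Gamma$ pointwise, $\psi$ maps $\stackrel\circ M$ bijectively onto itself, so $w_0:=\psi(z_0)\in\stackrel\circ M$. Apply Lemma \ref{L7.4} to $(M,g_2)$ at the point $w_0$ to obtain $h\in\mathcal{A}^\infty(M,g_2)$ with $h(w_0)=0$, $dh(w_0)\neq 0$, and $h(w)\neq 0$ for $w\neq w_0$. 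Since $h$ is holomorphic on $(\stackrel\circ M,\mathcal{C}_2)$ with non-vanishing differential at $w_0$, the holomorphic inverse function theorem produces a neighborhood $U_2\subset\stackrel\circ M$ of $w_0$ on which $h:U_2\to V_2:=h(U_2)\subset\mathbb{C}$ is a biholomorphism, hence a chart in the maximal atlas of the complex structure $\mathcal{C}_2$.

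By continuity of $\psi$, choose a $\mathcal{C}_1$-chart $\zeta:U_1\to V_1\subset\mathbb{C}$ at $z_0$ small enough that $\psi(U_1)\subset U_2$. The coordinate representation of $\psi|_{U_1}$ in the charts $(\zeta,h)$ is the map $h\circ\psi\circ\zeta^{-1}:V_1\to V_2$, which equals $(\varphi h)\circ\zeta^{-1}$. Since $\varphi h\in\mathcal{A}(M,g_1)$ is holomorphic on $(\stackrel\circ M,\mathcal{C}_1)$, its expression in the chart $\zeta$ is a holomorphic function on $V_1$. Thus $\psi$ is holomorphic at $z_0$, and since $z_0\in\stackrel\circ M$ was arbitrary, $\psi|_{\stackrel\circ M}$ is a holomorphic map. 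Applying the symmetric argument to the inverse isomorphism $\varphi^{-1}:\mathcal{A}(M,g_1)\to\mathcal{A}(M,g_2)$, which realises $\psi^{-1}$ as the analogous pullback, shows that $\psi^{-1}|_{\stackrel\circ M}$ is also holomorphic, so $\psi|_{\stackrel\circ M}$ is a biholomorphism. The crucial input is Lemma \ref{L7.4}: the existence of an element of the algebra with non-vanishing differential at an arbitrary prescribed interior point is precisely what converts the abstract Banach-algebra isomorphism $\varphi$ into genuine holomorphicity of $\psi$, and it is this step that would fail if one tried to argue from $\mathcal{A}(M,g_2)$ alone without the density and coordinate-producing strength supplied by $\mathcal{A}^\infty$.
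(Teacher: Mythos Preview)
Your proof is correct and follows essentially the same route as the paper's: derive the pullback identity $\varphi f=f\circ\psi$ from \eqref{8.3}, invoke Lemma~\ref{L7.4} to produce a local holomorphic coordinate at $\psi(z_0)$, and read off holomorphicity of $\psi$ from holomorphicity of $\varphi h\in\mathcal{A}(M,g_1)$. The only step the paper spells out more carefully is why the map associated with $\varphi^{-1}$ is indeed $\psi^{-1}$ (via a uniqueness argument for the pullback identity, using that the algebra separates points), but your ``symmetric argument'' remark is pointing at exactly this.
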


\begin{proof} (Compare with the proof of \cite[Theorem 1]{Ro}.)
We first prove the following statement:
\begin{equation}
\varphi f=f\circ\psi\quad\mbox{for any}\quad f\in{\mathcal A}(M,g_2).
                                \label{8.4}
\end{equation}
Indeed, choose a point $z_1\in M$ and denote $c=(\varphi f)(z_1)$ and $z_2=\psi(z_1)$. Treating $c$ as the constant function, we obtain with the help of \eqref{8.3}
$$
\varphi f-c\in m_{1,z_1},\quad f-c\in m_{2,z_2}.
$$
Thus the value of $f$ at $z_2$ is also $c$. This proves \eqref{8.4}. Moreover, $\psi$ is uniquely determined by \eqref{8.4}. Indeed, suppose $\psi_1:M\to M$ is another mapping such that $\varphi f=f\circ\psi_1=f\circ\psi$ for any $f\in{\mathcal A}(M,g_2)$. If there were a point $z$ where $\psi(z)\neq\psi_1(z)$ we could construct a function $f\in{\mathcal A}(M,g_2)$ with different values at these points and arrive at a contradiction.

Fix a point $z\in\stackrel\circ M$. By Lemma \ref{L7.4}, there exists a function $f\in{\mathcal A}(M,g_2)$ with a simple pole at $\psi(z)$. Set $g=\varphi f$. Then there is a neighborhood $U\subset\stackrel\circ M$ of $\psi(z)$ in which the function $f$ is univalent. Take a neighborhood $V\subset\stackrel\circ M$ of $z$ such that $V\subset\psi^{-1}(U)$ and $g(V)\subset f(U)$. Then in $V$ we have the representation $f^{-1}\circ g$ for the mapping $\psi$, and hence $\psi|_{\stackrel\circ M}:(\stackrel\circ M,{\mathcal C}_1)\to(\stackrel\circ M,{\mathcal C}_2)$ is an analytic mapping.

Let $\varphi^{-1}$ be the inverse isomorphism of \eqref{8.2}. Let $\psi$ and $\psi'$ be homeomorphisms of $M$ onto itself associated with $\varphi$ and $\varphi^{-1}$ respectively. Then $\psi\circ\psi'$ and $\psi\circ\psi'$ are homeomorphisms of $M$ onto itself which induce the identity isomorphisms of ${\mathcal A}(M,g_1)$ and ${\mathcal A}(M,g_2)$ respectively. By uniqueness we see that $\psi\circ\psi'$ and $\psi\circ\psi'$ must be identity maps. Thus $\psi'=\psi^{-1}$, and $\psi|_{\stackrel\circ M}$ is a biholomorphism between
$(\stackrel\circ M,{\mathcal C}_1)$ and $(\stackrel\circ M,{\mathcal C}_2)$.
\end{proof}

Finally, we combine Proposition \ref{P4.1} and Lemma \ref{L8.1}. Since
$\psi|_{\stackrel\circ M}:(\stackrel\circ M,{\mathcal C}_1)\to(\stackrel\circ M,{\mathcal C}_2)$ is a biholomorphism of Riemann surfaces,
$\psi|_{\stackrel\circ M}:\stackrel\circ M\to \stackrel\circ M$ is a diffeomorphism. We have seen that the homeomorphism $\psi:M\to M$ fixes the boundary, $\psi|_\Gamma=\mbox{Id}$. Since ${\mathcal C}_j={\mathcal C}_{g_j}$, the statement of Lemma \ref{L8.1} means that
$\psi|_{\stackrel\circ M}:(\stackrel\circ M,g_1)\to(\stackrel\circ M,g_2)$ is a conformal map between flat metric surfaces. Thus, $\psi$ satisfies all hypotheses of Proposition \ref{P4.1}. Applying the proposition, we infer that
$\psi:(M,g_1)\to(M,g_2)$ is an isometry. This completes the proof of Theorem \ref{Th3.3}.

{\bf Remark.} Belishev does not prove Theorem \ref{Th3.3} in \cite{Be}. There is only one sentence that can be related to Lemma \ref{L8.1}.
Namely, we read on page 180 of \cite{Be}: ``Step 4. Using functions $w\in{\mathcal A}(\Omega)$ as local homeomorphisms $\Omega\to{\mathbb C}$ recover on $\Omega$ the complex structure.'' The sentence is hard understandable and proves nothing.

\bigskip

\end{document}